\documentclass[11pt]{article}

\usepackage{amssymb, amsmath, amsthm, graphicx}
\usepackage[left=1in,top=1in,right=1in]{geometry}
\usepackage{algorithm}
\usepackage{algpseudocode}

\date{}

\theoremstyle{plain}
      \newtheorem{theorem}{Theorem}[section]
      \newtheorem{lemma}[theorem]{Lemma}
      \newtheorem{claim}[theorem]{Claim}
      
      \newtheorem{observation}[theorem]{Observation}
      \newtheorem{corollary}[theorem]{Corollary}

\theoremstyle{definition}

\theoremstyle{remark}

\title{Extremal structure in dense arrangements of $k$-intersecting curves}

\begin{document}

\author{Andrew Suk\thanks{Department of Mathematics, University of California at San Diego, La Jolla, CA, 92093 USA. Supported by NSF grants DMS-1952786 and DMS-2246847. Email: {\tt asuk@ucsd.edu}.}  \and Su Zhou\thanks{Department of Mathematics, University of California at San Diego, La Jolla, CA, 92093 USA. Supported by NSF grants DMS-1952786 and DMS-2246847. Email: {\tt suzhou@ucsd.edu}.} }

\maketitle

\begin{abstract}
Let $P$ be a set of $n$ points in the plane, and let $\mathcal C$ be a collection of $n$ simple $k$-intersecting curves, meaning that every two distinct curves of $\mathcal C$ meet in at most $k$ points. A classical theorem of Pach and Sharir from 1998 gives the upper bound $I(P,\mathcal C)=O_k(n^{(3k+1)/(2k+1)})$.  We prove that this bound can be improved when one excludes a complete
local incidence pattern. More precisely, for any fixed integers $s>k+1\ge2$, if there do not exist $s$ points of $P$ such that every $(k+1)$-tuple among them is contained in a distinct curve of $\mathcal C$, then
\[
I(P,\mathcal C)=o\!\left(n^{\frac{3k+1}{2k+1}}\right).
\]
In the special case of pseudo-segments, this extends Solymosi’s theorem on dense point-line arrangements to dense arrangements of pseudo-segments.
\end{abstract}

\section{Introduction}
Let $P$ be a set of $m$ points and let $\mathcal{L}$ be a set of $n$ lines in the plane. 
An \emph{incidence} is a pair $(p,\ell)\in P\times \mathcal{L}$ such that $p\in \ell$.  
We denote by $I(P,\mathcal{L})$ the number of incidences between $P$ and $\mathcal{L}$, that is,
\[
I(P,\mathcal{L})
=
\bigl|\{(p,\ell)\in P\times \mathcal{L} : p \in \ell\}\bigr|.
\]
A classical theorem of Szemer\'edi and Trotter~\cite{ST} states that for any set $P$ of $m$ points and any set $\mathcal{L}$ of $n$ lines in the plane, we have
\begin{equation}\label{IPL}
    I(P,\mathcal{L}) = O\!\left(m^{2/3}n^{2/3} + m + n\right).
\end{equation}

\noindent
Moreover, this bound is asymptotically tight, as shown by the
standard integer-lattice construction.  The Szemer\'edi--Trotter theorem is one of the most central results in discrete geometry and has served as a fundamental tool with far-reaching applications in combinatorics, discrete geometry, additive number theory, and theoretical computer science \cite{ST, PachSharir, MatousekBook, SharirAgarwal95, Elekes97, GuthKatz}.  

It is widely believed that extremal configurations maximizing the number of incidences between $n$ points and $n$ lines in the plane exhibit an underlying lattice-type structure. 
An early result in this direction is due to Solymosi~\cite{SolymosiLocallyDense}, who proved that if $P$ is a set of $n$ points and $\mathcal{L}$ is a set of $n$ lines in the plane whose arrangement avoids a configuration consisting of $s$ points together with a distinct line of $\mathcal{L}$ through every pair, then $I(P,\mathcal{L}) = o(n^{4/3}).$  See \cite{sukm,SuT,BF} for further results in this direction.

In 1997, Sz\'ekely~\cite{Sz} gave an elegant proof of the Szemer\'edi--Trotter theorem via the crossing lemma. 
His approach has since become a standard tool in incidence geometry and, in particular, implies that the Szemer\'edi--Trotter bound extends to families of \emph{pseudo-segments}, that is, collections of planar curves in which any two intersect in at most one point.

Building on Sz\'ekely's crossing-lemma method, Pach and Sharir~\cite{PachSharir}
established general incidence bounds for families of plane curves with
bounded degrees of freedom.  Let \(\mathcal C\) be a family of simple curves in the plane. We say that \(\mathcal C\) is \emph{\(k\)-intersecting} if every two distinct curves of \(\mathcal C\) meet in at most \(k\) points.  Their result implies the following.

\begin{theorem}[Pach--Sharir, \cite{PachSharir}]\label{PS}
Let $k \ge 1$ be an integer, and let $P$ be a set of $n$ points in the plane and $\mathcal C$ be a finite collection of $n$ $k$-intersecting curves. Then there exists a constant $c_k>0$, depending only on $k$, such that
\[
I(P,\mathcal C)\le c_k n^{\frac{3k+1}{2k+1}}.
\]
\end{theorem}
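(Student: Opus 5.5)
We follow Székely's crossing-lemma approach, as adapted by Pach and Sharir. We also use one classical extremal fact: the Kővári--Sós--Turán bound for the bipartite incidence graph.

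\emph{Step 1 (discard curves with few points).} Curves containing at most $k$ points of $P$ contribute at most $kn$ incidences in total. We discard them, so every remaining curve contains at least $k+1$ points of $P$.

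\emph{Step 2 (build a graph).} On each remaining curve $\gamma$, connect consecutive points of $P$ along $\gamma$ by the arc of $\gamma$ between them. This gives a multigraph $G$ drawn in the plane, with vertex set $P$ and with $e \ge I(P,\mathcal C) - n$ edges. Any two edges on different curves cross at most $k$ times. Hence the total number of crossings is at most $k\binom{n}{2} = O_k(n^2)$.

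\emph{Step 3 (control edge multiplicity).} The crossing lemma needs bounded edge multiplicity, and this is where $k$-intersection enters. Suppose some pair $p,q$ were joined by edges coming from many different curves. Any $k+1$ points of $P$ lie on at most one common curve, since two distinct curves share at most $k$ points. Therefore the incidence graph between $P$ and $\mathcal C$ contains no $K_{k+1,2}$ with the $k+1$ vertices on the point side. By Kővári--Sós--Turán, this gives $I = O(n^{1/2}\cdot n + n) = O(n^{3/2})$ trivially.

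To get the sharper exponent, we follow Pach and Sharir. Partition the edges by how many curves join the same pair of vertices. Treat high-multiplicity pairs through the forbidden $K_{k+1,2}$ structure, or, equivalently, pass to a subgraph in which each pair carries at most $k$-related multiplicity after a random thinning.

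The cleaner route, which is their actual argument, replaces consecutive-point edges with a bound on \emph{lenses}. They cut the curves into pseudo-segments: $n$ curves that are $k$-intersecting can be cut into $O_k(n^{2-1/(k+1)}\cdot\text{polylog})$ pieces, each pair meeting at most once. Szemerédi--Trotter for pseudo-segments, which follows from Székely's argument, then applies to the pieces. Instead, we use the direct crossing-lemma bound with multiplicity $M$:
\[
\mathrm{cr}(G) \ge c\,\frac{e^3}{M n^2}\quad\text{when } e \ge 4Mn.
\]

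\emph{Step 4 (combine).} Take $M$ to be the number of curves through a fixed pair of points. Every curve through $p$ and $q$ contains at least $k+1$ points, and any $k+1$ points determine at most one curve. Counting $(k+1)$-tuples then bounds the number of pairs of high multiplicity, giving a trade-off between $M$ and $e$. Comparing $e^3/(Mn^2) \lesssim k n^2$ with the multiplicity estimate and optimising the threshold for $M$ yields
\[
I \le c_k\, n^{(3k+1)/(2k+1)}.
\]

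The main obstacle is Step 3, namely controlling edge multiplicity, or equivalently the cost of converting $k$-intersecting curves into pseudo-segments. This is precisely where the exponent $(3k+1)/(2k+1)$ arises. I would first try the Pach--Sharir counting with $(k+1)$-tuples on curves and a dyadic decomposition by multiplicity.
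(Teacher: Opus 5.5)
The paper does not prove this statement; it quotes it from Pach and Sharir, and later uses their crossing-sensitive form, Lemma~\ref{PSlemma}. Your proposal therefore has to stand on its own, and it has a genuine gap. Steps~1--2 together with Sz\'ekely's multigraph crossing lemma $\mathrm{cr}(G)\gtrsim e^3/(Mn^2)$ are only the standard set-up. They settle $k=1$, where two points lie on at most one curve and $M=1$. For $k\ge 2$ the whole content of the theorem is controlling edge multiplicity, and Steps~3--4 assert a ``trade-off between $M$ and $e$'' without deriving one.

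Moreover, the mechanisms you name cannot supply that trade-off:
\begin{itemize}
\item \emph{Pairwise intersections.} Since two curves share at most $k$ points, $\sum_{\{p,q\}}\binom{\mu(p,q)}{2}\le k\binom n2$. So at most $O((I+kn^2)/M)$ edges have multiplicity $>M$. Combining this with the crossing lemma for the remaining edges and optimizing ($M\approx n^{1/2}$) gives only $I=O(n^{3/2})$, which is the K\H{o}v\'ari--S\'os--Tur\'an bound you already had.
\item \emph{Counting $(k+1)$-tuples.} This only shows that a pair lies on at most about $(n/t)^{k-1}$ curves carrying $\sim t$ points. Feeding that multiplicity into $e^3/(Mn^2)\lesssim kn^2$ bounds the number of such curves by $O(n^{k+1}/t^{k+2}+n^k/t^k)$. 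Summing over $t$, this again gives nothing better than $n^{3/2}$, for every $k\ge2$.
\item \emph{Random thinning.} Sampling curves with probability $\rho$ scales $e$, $M$ and the crossing count by $\rho,\rho,\rho^2$, so $e^3/(Mn^2)\lesssim X$ is unchanged. Keeping only boundedly many parallel edges per pair simply discards the heavy incidences you need to bound.
\item \emph{Cutting into pseudo-segments.} This detour is not the Pach--Sharir argument, and it would not help. Szemer\'edi--Trotter for $n$ points and $N\approx n^{2-1/(k+1)}$ pseudo-segments gives only $O(n^{2/3}N^{2/3})$, which is worse than $n^{3/2}$.
\end{itemize}

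Concretely, here is what is missing. To extract $I\lesssim n^{(3k+1)/(2k+1)}$ from $I^3\lesssim M n^2\cdot kn^2$, you must show the following: after discarding a small fraction of the incidences, every pair of points has multiplicity $O(n^{(k-1)/(2k+1)})$. Equivalently, the multiplicity must be about $(I/n)^{(k-1)/k}$, a fractional power of the average point degree. The counting above only gives $(n/t)^{k-1}\approx n^{(k^2-1)/(2k+1)}$ at the relevant scale $t\approx I/n$, which falls short by a polynomial factor for every $k\ge 2$. Your sketch contains no argument that reaches the required threshold.
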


Our main result shows that the Pach--Sharir bound is not asymptotically tight under a natural local forbidden-configuration hypothesis.

\begin{theorem}\label{soly}
Fix \(s>k+1\ge 2\), and let \(P\) be a set of \(n\) points in the plane and \(\mathcal C\) a set of \(n\) simple \(k\)-intersecting curves.
If there do not exist \(s\) points of \(P\) such that every \((k+1)\)-tuple among them is contained in a distinct curve of \(\mathcal C\), then
\[
I(P,\mathcal C)=o\!\left(n^{\frac{3k+1}{2k+1}}\right).
\]
\end{theorem}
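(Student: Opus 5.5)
We argue by contradiction, following Solymosi's strategy for lines. Suppose $I(P,\mathcal C)\ge \varepsilon n^{(3k+1)/(2k+1)}$ for some fixed $\varepsilon>0$ and infinitely many $n$. First we prune: repeatedly delete curves with fewer than $\frac{\varepsilon}{4} n^{k/(2k+1)}$ incidences and points with few incidences. This keeps a constant fraction of the incidences, so we may assume every remaining curve carries $\Theta_\varepsilon(n^{k/(2k+1)})$ points. We also may assume every remaining point lies on $\Omega_\varepsilon(n^{k/(2k+1)})$ curves; if it is convenient, we cap the richness by dyadic pigeonholing so that it is $\Theta(n^{k/(2k+1)})$.

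The key intermediate claim is a \emph{local density} statement. It comes from the Sz\'ekely crossing-lemma argument applied with the Pach--Sharir counting. Draw the multigraph $G$ whose vertices are $P$ and whose edges join consecutive points along each curve; then $|E(G)|\approx I$. The crossing number is at most $k\binom{n}{2}$. The point of Theorem~\ref{PS} is that near-equality forces $G$ (after removing multiplicities) to be essentially tight for the crossing lemma, $\mathrm{cr}(G)=\Theta(|E|^{?}/|V|^{?})$. In the $k$-intersecting setting this tightness involves an edge multiplicity bound, since two points lie on many common curves only in a limited way. We plan to use a cutting/partition step instead of crossing tightness directly. We partition the plane by a random sample of $r$ curves, with $r$ a large constant depending on $\varepsilon,s$. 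Using the arrangement of the sample, a cell decomposition (trapezoidal/vertical decomposition of $k$-intersecting curves, with $O(r^2)$ cells, each crossed by $O(n/r)$ curves) lets us apply Theorem~\ref{PS} inside each cell. Summing the bound over the cells gives the total $O(r^2\cdot c_k (n/r)^{(3k+1)/(2k+1)}\cdot(\text{point-balance}))$. This has a smaller exponent in $r$ unless incidences concentrate in few cells, and when balanced it contradicts our assumption for large $r$. Hence some cell $\Delta$ with $m$ points and $O(n/r)$ curves must contain incidences $\gg$ what Theorem~\ref{PS} allows for nonbalanced parameters, i.e.\ a subconfiguration whose density relative to its size is amplified. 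Iterating this amplification a bounded number of times, with the number of rounds depending on $s$, yields a subset $P'$ of $N$ points and $N$ curves forming an extremely dense configuration. Concretely, we aim for every $(k+1)$-tuple of points in some large set being ``covered'' with positive density.

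In the final step we extract the forbidden pattern. In the dense subconfiguration, count the pairs ($(k+1)$-tuple of points, curve containing it). A curve with $t$ points contains $\binom{t}{k+1}$ tuples. Since two curves share at most $k$ points, distinct curves cover disjoint $(k+1)$-tuples. So the $(k+1)$-uniform hypergraph $H$ on $P'$, whose edges are the tuples lying on a common curve, has positive density $\delta(\varepsilon,s)$ once the configuration is sufficiently dense (the total count $\sum_C\binom{|C\cap P'|}{k+1}$ is $\Omega(|P'|^{k+1})$). A hypergraph Ramsey/supersaturation argument does not directly give a clique, since positive density alone does not force $K^{(k+1)}_s$. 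We will therefore instead use the iterative structure, as in Solymosi's proof. In that proof one finds a point whose neighborhood is again dense, and recursion $s$ times builds the clique point by point. Each new point is chosen among points lying on curves through all previously chosen $k$-tuples, and the disjointness of tuple covers guarantees the curves are distinct.

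The main obstacle is the amplification/recursion step. We must show that near-extremality of Theorem~\ref{PS} is inherited by a suitable local piece, namely the points on curves through a fixed point or fixed $k$-tuple, with loss only depending on $\varepsilon$. For lines this used the projective structure, which we lack here. We would try the cutting approach above first, with the $o(\cdot)$ conclusion coming from a density-increment iteration that terminates after $O_{\varepsilon,s}(1)$ rounds.
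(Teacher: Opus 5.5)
Your proposal has two genuine gaps, and the step you flag as the main obstacle is where it breaks. First, localization. A partition into $O(r^2)$ cells, each met by $O(n/r)$ curves with $r$ a large constant, gives neither a contradiction nor an amplification. The Pach--Sharir bound is invariant under such a partition: if cell $i$ contains $m_i$ points, concavity gives
\[
\sum_i m_i^{\frac{k+1}{2k+1}}\Bigl(\frac nr\Bigr)^{\frac{2k}{2k+1}}\le (r^2)^{\frac{k}{2k+1}}\,n^{\frac{k+1}{2k+1}}\Bigl(\frac nr\Bigr)^{\frac{2k}{2k+1}}=n^{\frac{3k+1}{2k+1}},
\]
and the balanced distribution is the worst case, not the best. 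So applying Theorem~\ref{PS} cell by cell can never contradict $I\ge \varepsilon n^{(3k+1)/(2k+1)}$; the contradiction must come from the forbidden configuration. Moreover, vertical decompositions and sampling-based cuttings are not available for arbitrary $k$-intersecting Jordan curves, whose cells need not have bounded complexity; this is why the paper avoids them.

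What is needed is a single decomposition at polynomial scale. With $T=n^{1/(2k+1)}$, the paper does the following. It removes points of degree at least $\ell\approx T^k$ (Lemma~\ref{smalldeg}) and forms the planar graph of the arrangement. It surrounds each remaining point by nested cycles, so that few points become boundary vertices. It then applies the refined $r$-division (Theorem~\ref{rdiv}) with $r\approx T^{2k+2}$ and $t\approx T$. Cutting curves at boundary vertices and discarding leftover runs of fewer than $s$ points loses few incidences. Pigeonhole then gives a region with $N=O_{\varepsilon,k,s}(T)$ points, and the unbalanced form of Lemma~\ref{PSlemma} gives $\Omega(N^{k+1})$ curves in it, each containing at least $s$ of these points. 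So the right dense object is $N$ points with $\Theta(N^{k+1})$ rich curves, not ``$N$ points and $N$ curves.''

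Second, the extraction step is missing its key idea. You correctly note that positive density of the tuple hypergraph does not force $K_s^{(k+1)}$. The point is that the hypergraph is an \emph{edge-disjoint union} of $\delta N^{k+1}$ copies of $K_s^{(k+1)}$, one for each block of $s$ consecutive points on a rich curve; they are edge-disjoint because two curves share at most $k$ points. The hypergraph removal lemma (Lemma~\ref{lem:supersat-hyper}) then gives $\alpha N^s$ copies. Copies with $k+2$ vertices on one curve number only $O(sN^{s-1})$, and every other copy has its $\binom{s}{k+1}$ tuples on distinct curves. Solymosi's proof for lines ends the same way, via the graph removal lemma, not by building the clique point by point.

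Your recursion is also flawed as stated: disjointness of tuple covers does not make the curves distinct. Whenever $k+2$ chosen points lie on one curve, several $(k+1)$-tuples share that curve, and this is exactly the degenerate case to be excluded. More fundamentally, this step cannot be done with losses polynomial in $\delta$, even for lines. Take the points $(0,x)$, $(1,x+a)$, $(2,x+2a)$ with $x\in[N]$ and $a$ in a Behrend set. This gives $O(N)$ points and $N^2e^{-O(\sqrt{\log N})}$ lines through three points each. Yet no three points are pairwise joined by distinct lines, since such a triple would force $a_1+a_2=2a_3$ in the Behrend set. So some tool of removal-lemma strength is unavoidable, and a neighborhood-density recursion of the kind you describe does not supply one.
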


\noindent
Our proof is based on a refined $r$-division argument. In particular,
we prove a modified $r$-division theorem adapted to a prescribed set of
vertices. This allows us to work directly with general
$k$-intersecting curves, rather than lines or algebraic curves of
bounded complexity.

As an immediate corollary, we extend Solymosi's result~\cite{SolymosiLocallyDense} from lines to pseudo-segments. 

\begin{corollary}
Fix \(s>2\), and let \(P\) be a set of \(n\) points in the plane and \(\mathcal C\) a set of \(n\) pseudo-segments.
If there do not exist \(s\) points of \(P\) such that every pair among them is contained in a distinct curve of \(\mathcal C\), then
\[
I(P,\mathcal C)=o(n^{4/3}).
\]
\end{corollary}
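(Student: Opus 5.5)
The corollary is the special case $k=1$ of Theorem~\ref{soly}, so the plan is simply to verify that the hypotheses match and invoke the theorem. A family of pseudo-segments is, by definition, a collection of simple planar curves in which any two distinct curves meet in at most one point. This is exactly a $1$-intersecting family in the sense defined before Theorem~\ref{PS}. Setting $k=1$, the condition $s>k+1\ge 2$ of Theorem~\ref{soly} becomes $s>2$, which is precisely the range assumed in the corollary.

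Next we check that the forbidden configuration translates correctly. With $k=1$, a $(k+1)$-tuple is a pair. So the hypothesis of Theorem~\ref{soly}, that there are no $s$ points of $P$ with every $(k+1)$-tuple among them contained in a distinct curve of $\mathcal C$, reads: there are no $s$ points of $P$ such that every pair among them lies on a distinct curve of $\mathcal C$. This is verbatim the hypothesis of the corollary.

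Finally, the exponent evaluates to $\frac{3k+1}{2k+1}=\frac{4}{3}$ at $k=1$. Theorem~\ref{soly} therefore gives $I(P,\mathcal C)=o(n^{4/3})$, as claimed.

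There is no real obstacle here. The only point needing care is the convention that pseudo-segments are simple curves, as Theorem~\ref{soly} requires. Under the standard definition (Jordan arcs that pairwise meet at most once) this holds, so the corollary follows directly.
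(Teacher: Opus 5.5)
Your proposal is correct and is exactly the paper's argument: the paper presents the corollary as an immediate consequence of Theorem~\ref{soly} with $k=1$. Pseudo-segments are simple $1$-intersecting curves, the condition $s>k+1\ge 2$ becomes $s>2$, $(k+1)$-tuples become pairs, and $\frac{3k+1}{2k+1}=\frac{4}{3}$. (As a side remark, Lemma~\ref{lem:supersat-hyper} is only stated for $k\ge 2$, so at $k=1$ the proof of Theorem~\ref{soly} implicitly relies on the ordinary graph removal lemma; since Theorem~\ref{soly} itself is stated for all $k\ge 1$, this does not affect your derivation.)
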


In the case of points and lines, the first author and Tomon \cite{SuT} constructed point-line arrangements with many incidences while forbidding several natural dense subconfigurations, including grids, fans, and short cycles.  In particular, they proved that there exist \(n\) points \(P\) and \(n\) lines \(\mathcal L\) in the plane with no three points of \(P\) such that every pair of them is contained in a distinct line of \(\mathcal L\), and which determine at least \(n^{7/6-o(1)}\) incidences.  We improve this bound to \(n^{6/5-o(1)}\) in the pseudo-segment setting, and more generally prove the following result.

\begin{theorem}\label{thm:lower}
For every fixed integer \(s\ge 3\), there exists a set \(P\) of \(n\) points and a set \(\mathcal C\) of \(n\) pseudo-segments in the plane such that there do not exist \(s\) points of \(P\) with the property that every pair of them lies on a distinct curve of \(\mathcal C\), and
\[
I(P,\mathcal C)\ge
n^{\frac43-\frac{s-1}{3(s^2-s-1)}-o(1)}.
\]
\end{theorem}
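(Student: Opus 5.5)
We build the configuration by a random construction in an abstract incidence structure. The target is a bipartite point–curve incidence structure that is realizable by pseudo-segments and contains no copy of the forbidden pattern. The exponent $\frac43-\frac{s-1}{3(s^2-s-1)}$ suggests the standard deletion argument. Count copies of the forbidden configuration $F_s$: $s$ points and $\binom s2$ distinct curves, one through each pair. $F_s$ has $s+\binom s2$ vertices and $2\binom s2=s(s-1)$ edges. Start from a dense pseudo-segment arrangement, keep each curve (or each incidence) with probability $p$, and delete one element from every surviving copy of $F_s$.

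\textbf{Base arrangement.} Take the Szemer\'edi--Trotter grid: $n$ points of $[N]\times[N^2]$ and $n$ lines $y=ax+b$, giving $\Theta(n^{4/3})$ incidences and each line carrying about $n^{1/3}$ points. Lines are pseudo-segments.

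\textbf{Flexibility of pseudo-segments.} The gain over Suk--Tomon's $n^{7/6}$ should come from the fact that in the pseudo-segment setting we may keep each incidence independently. Concretely, we redraw each line as a curve that follows the line but makes a tiny detour around a point exactly when we want to drop that incidence. These local perturbations keep any two curves crossing at most once, provided each detour is a small bump inside an $\varepsilon$-disc around the point that does not cross any other curve's portion. The crossing of two curves is then unchanged, or is merely moved slightly. Thus any subset $E'$ of the grid incidences is realizable by pseudo-segments.

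\textbf{Counting and deletion.} Keep each incidence with probability $p$. The expected number of incidences is $pn^{4/3}$. For copies of $F_s$ in the full grid, choose $s$ points; each pair determines at most one line. A copy needs every pair on a line with all lines distinct. A crude count is that the number of copies is at most the number of $s$-tuples of points with all pairs collinear in the grid, with the lines determined. The intended bound is about $n^{s}\cdot(\text{probability structure})$.

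The exponent in the statement must come from balancing $p^{s(s-1)}\cdot\#\{F_s\text{ in grid}\}$ against $pn^{4/3}$. Solving $\frac43-\frac{s-1}{3(s^2-s-1)}=\frac43+\log_n p$ gives $p=n^{-\frac{s-1}{3(s^2-s-1)}}$.

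To reach this, the number of copies in the grid should be $\approx n^{\frac43+(s(s-1)-1)\frac{s-1}{3(s^2-s-1)}}=n^{\frac43+\frac{s-1}3}=n^{(s+3)/3}$. This is plausible: choose the first point, then each subsequent point via a line through it ($n^{1/3}$ rich choices each), and so on. The main obstacle is proving this count rigorously, i.e. bounding the number of $s$-point sets in the grid with all pairs lying on distinct rich lines by $n^{(s+3)/3+o(1)}$, uniformly in $s$. If the direct count is hard, I would instead choose the base arrangement more cleverly, for example a random subset of lines or a different grid aspect ratio, to make the count tractable. The $n^{o(1)}$ losses come from divisor-type bounds in this counting.

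With that count in hand, deleting one incidence from each surviving copy removes at most half of the kept incidences. This leaves $n^{\frac43-\frac{s-1}{3(s^2-s-1)}-o(1)}$ incidences with no $F_s$. Finally, pad with dummy points and curves so that there are exactly $n$ of each.
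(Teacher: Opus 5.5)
Your route is the paper's: the grid $\{(a,b):a<n^{1/3},\,b<n^{2/3}\}$ with lines $y=ax+b$, independent retention of incidences with probability $p$, local detours, and deletion. You also correctly reduce the problem to showing that the number of $s$-point sets of $P$ with every pair on a common grid line is at most $n^{1+s/3+o(1)}$. But you leave exactly this count unproved, and it is the only step with real quantitative content. Without it, neither the choice of $p$ nor the final exponent is justified.

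Your heuristic does not supply the count. Read literally (first point in $n$ ways, each later point in $n^{1/3}$ ways), it gives $n^{1+(s-1)/3}$, which undercounts, since a point collinear with a given one has about $n^{2/3}$ choices. Bounding every later point by $n^{2/3}$ gives only $n^{1+2(s-1)/3}$, too weak for the stated exponent. Falling back on a ``more cleverly chosen'' base arrangement is not an argument, and it is not needed.

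The missing ingredient is a degree/codegree bound for the collinearity graph $G$ on $P$, where two points are adjacent when they lie on a common grid line. The paper imports it as Claim 16 of Suk--Tomon. Every vertex has degree at most $n^{2/3}$, because through a point there is one line per slope and each line has at most $n^{1/3}$ points. Any two distinct vertices have at most $n^{1/3+o(1)}$ common neighbours; this is the divisor-type estimate you anticipated. A common neighbour $(x,y)$ of $(x_1,y_1)$ and $(x_2,y_2)$ has $x\neq x_1,x_2$ and satisfies $a(x-x_1)-a'(x-x_2)=y_2-y_1$ with integer slopes $0\le a,a'<n^{1/3}$, and the pair $(x,a)$ determines $y$. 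For fixed $x$ the admissible $a$ form an arithmetic progression. Summing over $x$ gives $n^{1/3+o(1)}$: if $x_1\neq x_2$, use that $\gcd(x-x_1,x-x_2)$ divides $x_2-x_1$; if $x_1=x_2$, use that $x-x_1$ must divide $y_2-y_1\neq 0$. Now count a bad $s$-set by first choosing an edge $uv$ (at most $n\cdot n^{2/3}$ ways) and then the other $s-2$ points inside $N(u)\cap N(v)$. This gives $n^{5/3}(n^{1/3+o(1)})^{s-2}=n^{1+s/3+o(1)}$: only the second vertex costs $n^{2/3}$, and every later one costs $n^{1/3}$. With this, your balancing $n^{4/3}p\approx n^{1+s/3+o(1)}p^{s(s-1)}$ gives $p=n^{-\frac{s-1}{3(s^2-s-1)}-o(1)}$, and the rest of your argument goes through as in the paper.

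A minor point on the detours: a detour around $q$ cannot avoid the other curves through $q$. It crosses each of them once near $q$. Performing the detours one at a time inside successively smaller discs keeps every pair of curves crossing at most once.
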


It is not known whether the Pach--Sharir bound in Theorem~\ref{PS} is asymptotically tight for any fixed \(k\ge2\). The best lower bound remains the classical Szemer\'edi--Trotter point--line construction, which gives \(I(P,\mathcal C)=\Omega(n^{4/3})\). Thus, for every \(k\ge2\), there remains a substantial gap between the best known upper and lower bounds.  For incidences between points and algebraic curves of bounded degree in the plane, see the work of Sharir and Zahl~\cite{ShZ}.

Our paper is organized as follows. In Section~\ref{sec:rdiv}, we prove a refined \(r\)-division theorem adapted to our setting. In Section~\ref{sec:dense}, we combine this decomposition theorem with the hypergraph removal lemma to prove Theorem~\ref{soly} and its non-diagonal variant. In Section~\ref{sec:lower}, we prove the lower bound construction in Theorem~\ref{thm:lower}.  We conclude with some final remarks.

\section{A refined $r$-division theorem}\label{sec:rdiv}

A classical result due to Lipton and Tarjan~\cite{LiptonTarjan1979} from 1979, known as the \emph{planar separator theorem}, states that every $N$-vertex planar graph contains a set of at most $O(\sqrt{N})$ vertices whose removal leaves no connected component with more than $2N/3$ of the vertices.
It has played a central role in the structural theory of planar graphs, enabling efficient divide-and-conquer arguments.
Over the past several decades, numerous variants of the planar separator theorem have been established, including a separator theorem for nonplanar graphs due to Alon, Seymour, and Thomas~\cite{AlonSeymourThomas1990}, as well as the shallow-minor and excluded-minor separator results of Plotkin, Rao, and Smith~\cite{PlotkinRaoSmith1994}.
We will use the following simple-cycle version due to Miller.  Throughout this paper, all simple cycles $C$ will be viewed as a simple closed curve in the plane.  Moreover, all graphs are allowed to have multiple edges.

\begin{theorem}[Miller, \cite{Miller1986}]\label{cycle}
Let $G$ be a planar graph with $N$ vertices embedded in the plane with all faces of size 2 or 3, and let $w : V(G) \to \mathbb{R}_{\ge 0}$ be a nonnegative weight function on the vertices of $G$ with total weight $W$. Then there exists a simple cycle $C$ in $G$ such that $|C| = O(\sqrt{N})$, the  total weight of the vertices strictly inside $C$ is at most $2W/3$, and the total weight of the vertices strictly outside $C$ is at most $2W/3$.
 \end{theorem}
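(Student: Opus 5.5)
The plan is to follow the Lipton--Tarjan strategy, organized around simple cycles as in Miller's argument. First come some normalizations. We may assume $G$ is connected: since every face is bounded by a $2$- or $3$-cycle, the embedding is a (multi)triangulation, and such a graph is connected. We may also assume no single vertex has weight exceeding $W/3$. Indeed, a heavy vertex $v$ lies on a short cycle through a face containing it, and a cycle through $v$ removes $v$'s weight from both sides, after which the rest is handled recursively or directly. Throughout, a vertex lying on $C$ counts toward neither side, which only helps.

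\textbf{Step 1 (bounded radius case).} Fix a BFS spanning tree $T$ rooted at $r$ of depth $h$. Every non-tree edge $e=uv$ closes a \emph{fundamental cycle} $C_e$, consisting of the two tree paths from $u$ and $v$ to their lowest common ancestor together with $e$. This cycle is simple and has length at most $2h+1$. Because all faces are triangles or digons, the non-tree edges form a spanning tree $T^*$ of the dual graph, with degrees at most $3$. Each edge $e$ of $T^*$ splits the faces into two groups. The vertices strictly inside $C_e$ are exactly the vertices interior to the union of faces on one side. We now walk in $T^*$ toward the heavier side, measuring the weight strictly inside the current fundamental cycle. The standard centroid argument on the degree-$\le 3$ dual tree yields an edge $e$ with at most $2W/3$ weight strictly inside and at most $2W/3$ strictly outside $C_e$. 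Since a single step of the walk changes the enclosed vertex set by at most one triangle, the inside weight moves by at most one vertex's weight, which is $\le W/3$. So if $h=O(\sqrt N)$ we are done.

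\textbf{Step 2 (reducing the radius).} Let $L_i$ be the set of vertices at BFS level $i$, and let $i_0$ be the median level with respect to weight. Since $\sum_i |L_i| = N$, we can pick levels $i_1\le i_0\le i_2$ with $i_0-i_1,\ i_2-i_0 \le \sqrt N$ and $|L_{i_1}|,|L_{i_2}|\le \sqrt N$. In a triangulation, the boundary between the union of faces meeting levels $<i$ and the rest decomposes into vertex-disjoint simple cycles made of level-$i$ vertices. So each such boundary is a family of simple cycles of total length at most $|L_i|$.
\begin{itemize}
\item If one of these level cycles alone balances the weight, we output it.
\item Otherwise, some region $R$ between a cycle on level $i_1$ and cycles on level $i_2$ carries more than $W/3$ weight. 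We contract the part outside $R$ beyond the level-$i_1$ cycle to a single vertex, keeping the triangulated structure by retriangulating. We also replace each component beyond level $i_2$ by a single weighted vertex adjacent to its boundary cycle.
\end{itemize}
The resulting triangulation has radius $O(\sqrt N)$, so Step 1 gives a fundamental cycle separator of length $O(\sqrt N)$ in it. The component weights are assigned to the contracted vertices so that the balance condition transfers back.

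\textbf{Step 3 (lifting back to a simple cycle).} The main obstacle is converting the cycle from Step 2 back to a \emph{simple} cycle in $G$. A cycle passing through a contracted vertex must be rerouted along an arc of the corresponding level cycle, which has length $\le\sqrt N$. Both possible arcs keep the cycle simple, since the level cycles are disjoint from the interior of $R$. We have to choose between the two arcs, and possibly swap in the outer level cycle, so that the $2W/3$ balance survives. I would handle this by a case analysis. The weight enclosed changes by the weight of an entire contracted component, so we pick the arc, or complement, that keeps each side at most $2W/3$. This uses the fact that each contracted component has weight less than $W/3$ in the case where no level cycle alone separated. The total length is then $O(\sqrt N)+2\sqrt N=O(\sqrt N)$.
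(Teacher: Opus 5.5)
\textbf{Main gap: Step 2 is anchored at the wrong level.} Note first that the paper gives no proof here; it imports Miller's theorem as a black box, so your argument has to stand on its own. It breaks in Step 2. You build everything around the weighted median level $i_0$. You then assert that, if no level cycle balances, some region $R$ between a level-$i_1$ cycle and level-$i_2$ cycles carries more than $W/3$. This is false.

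Here is a counterexample. Let $r$ have link $a_1\cdots a_8$, and add the chords $a_1a_3,a_3a_5,a_5a_7,a_7a_1,a_1a_5$. Inside each of the triangles $a_1a_2a_3$, $a_3a_4a_5$, $a_5a_6a_7$, $a_7a_8a_1$, nest $M\gg\sqrt N$ triangles joined by triangulated annuli, around a central vertex of weight $W/4$. Every level cycle encloses weight $W/4$ or $W$, so none balances. Here $i_0=M+2$, and every region between levels near $i_0$ carries only $W/4$. The short balanced cycles, such as $r a_1 a_3 a_5$, sit about $M$ levels away from $i_0$. So nothing built around the median level can find them.

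\textbf{The missing idea.} Consider the level components: the dual components of the set of faces all of whose vertices have level at least $i$. These are nested, and by bond--cycle duality each is bounded by a simple cycle of level-$i$ vertices. These cycles need not be vertex-disjoint, contrary to what you claim, but that is harmless. Descend through this nesting to a deepest component $D^*$, at level $i^*$, of weight $>2W/3$.

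If no level cycle balances, then every child of $D^*$ has weight $<W/3$, and so does the whole complement of $D^*$. Now choose short levels $i_1\le i^*<i_2$ within $\sqrt N$ of $i^*$, not of $i_0$. Then every contracted vertex, outer and inner, has weight $<W/3$. Since $a+b+w_{\mathrm{out}}+w_D\le W$ with $a,b\le 2W/3$, the two contracted weights can always be placed so that both sides stay at most $2W/3$.

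\textbf{Two further repairs.}
\begin{itemize}
\item In Step 1, a move through a degree-$3$ node of $T^*$ discards a whole branch, not one triangle, so your continuity justification is wrong. Use the Lipton--Tarjan case analysis instead: move to the heavier branch $I_1$. When its weight first drops to at most $2W/3$, the tree path $p$ from the apex lies on the new cycle. Then $2w(I_1)+w(p)>2W/3$ gives $w(I_1)+w(p)>W/3$, which forces the outside weight below $2W/3$.
\item In Step 3, simplicity of the rerouted cycle does not follow from level cycles avoiding the interior of $R$. It comes from rooting the BFS of the contracted graph at the outer contracted vertex, which preserves levels. Then a fundamental cycle meets level $i_2$ only at the two neighbours of a single inner contracted vertex, which can occur only as an endpoint of the non-tree edge. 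It also meets the level-$i_1$ cycle only at the two neighbours of the root, so either arc can be used.
\end{itemize}
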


In 1987, Frederickson~\cite{Fre87} introduced the notion of an \(r\)-division of a planar graph, providing a refined separator-based decomposition into a structured partition.
To define \(r\)-divisions precisely, we first recall several definitions.  Let \(G=(V,E,F)\) be a planar graph embedded in the plane, where \(V\) is the set of vertices, represented by distinct points in the plane, \(E\) is the set of edges, drawn as simple curves connecting the corresponding pairs of vertices, and \(F\) is the set of faces determined by this embedding.
A subgraph \(R\subseteq G\) is called a \emph{region of \(G\)} if \(R\) is a connected edge-induced subgraph embedded in the plane.
A \emph{division} of \(G\) is a collection of regions \(\mathcal R=\{R_1,\dots,R_k\}\) such that each edge of \(G\) lies in at least one region \(R_i\).
Given a division \(\mathcal R\) of \(G\), a vertex \(v\in V(G)\) is called a \emph{boundary vertex} if it lies in more than one region of \(\mathcal R\).
For convenience, we denote by \(b(R)\subseteq V(R)\) the set of boundary vertices of a region \(R\).
We say that a division \(\mathcal R\) is an \emph{\(r\)-division} if each region \(R_i\) has at most \(O(r)\) vertices, \(|\mathcal R| = O(|V|/r)\), and each \(R_i\) has at most \(O(\sqrt r)\) boundary vertices.

\begin{theorem}[\cite{Fre87}]\label{fred}
    There is an absolute constant $r_0$ such that the following holds.  For $r\geq r_0$, every planar graph embedded in the plane admits an $r$-division.
\end{theorem}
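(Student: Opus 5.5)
We will prove Theorem~\ref{fred} by recursive separation with Theorem~\ref{cycle}, in the style of Frederickson's original argument. First we reduce to a triangulated graph. Starting from the embedded planar graph $G$, we add edges, possibly creating multiple edges (which the paper allows), to obtain a graph $G'$ on the same vertex set in which every face has size $2$ or $3$. Any $r$-division of $G'$ restricts to one of $G$. Each region $R'$ of $G'$ gives the edge-induced subgraph on $E(R')\cap E(G)$. This subgraph may be disconnected, so we split it into its connected components. The components are vertex-disjoint, so the boundary count and vertex count per piece do not increase. The number of pieces is controlled because each vertex lies in $O(1)$ regions on average; we will check this with the counting done below.

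\textbf{Stage 1: a coarse division.} We recursively split by vertex count alone. Given a piece $H$ with $N>r$ vertices, we retriangulate it and apply Theorem~\ref{cycle} with uniform weight on the non-cycle vertices. This gives a cycle $C$ with $O(\sqrt N)$ vertices. We then form two pieces: the subgraph of edges inside or on $C$, and the subgraph of edges outside or on $C$. Each has at most $2N/3+O(\sqrt N)$ vertices, and the vertices of $C$ become boundary vertices.

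The standard recursion bounds the total number of boundary vertices. Let $B(N)$ be that total; then
\[
B(N)\le B(\alpha N+O(\sqrt N))+B((1-\alpha)N+O(\sqrt N))+O(\sqrt N)
\]
with $1/3\le\alpha\le 2/3$. This solves to $B(N)=O(N/\sqrt r)$. The same recursion shows the number of pieces is $O(N/r)$ and the sum of their sizes is $N+O(N/\sqrt r)$. At this point every piece has $O(r)$ vertices, but an individual piece may have too many boundary vertices.

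\textbf{Stage 2: fixing the boundary counts.} For each piece $R$ with $|b(R)|>c\sqrt r$, we apply Theorem~\ref{cycle} again. This time the weight is $1$ on the boundary vertices of $R$ and $0$ elsewhere. We recurse until every piece has at most $c\sqrt r$ boundary vertices. Each split cuts the boundary count of the current piece by a constant factor and adds $O(\sqrt{|R|})=O(\sqrt r)$ new boundary vertices.

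The main obstacle is the accounting in this stage: we must show that the total number of new pieces and new boundary vertices stays $O(N/r)$ and $O(N/\sqrt r)$. The argument we intend to use is the following. A piece with $b$ boundary vertices, where $b\ge c\sqrt r$, produces $O(b/\sqrt r)$ pieces in total. Each split removes a constant fraction of the weight while adding only $O(\sqrt r)$ vertices, and for $c$ large this is dominated by the weight reduction. Summing over pieces gives $O(\sum_R b(R)/\sqrt r)+O(N/r)=O(N/r)$ pieces in all, and hence $O(N/\sqrt r)$ added boundary vertices.

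Two technical points remain. First, separator cycles must be taken inside the retriangulated piece; we use only its original edges, so the cycle vertices become the boundary. Second, we must handle vertices shared by pieces, which is harmless because boundary vertices are counted with multiplicity. Finally, $r_0$ is chosen so that the $O(\sqrt N)$ error terms are smaller than a constant fraction of $N$ whenever $N\ge r\ge r_0$, which keeps the recursion well-founded.
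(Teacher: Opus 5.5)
Your Stage~1/Stage~2 accounting is Frederickson's classical two-phase scheme, and its substance is sound. It differs from the route the paper describes, following Klein--Mozes--Sommer: there, vertex-balancing and boundary-balancing separators alternate by depth in a single recursion, and the overcount is controlled by Lemma~\ref{lem:KMS-recurrence}.

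\textbf{The gap is in producing \emph{connected} regions,} which the paper's definition of a division requires. Your claim that any $r$-division of the triangulation $G'$ restricts to one of $G$ fails where you split $E(R')\cap E(G)$ into components. ``Each vertex lies in $O(1)$ regions on average'' bounds $\sum_R |V(R)|$, not the number of components, and components can be tiny. For connected $G$, the only general bound is that each component contains a boundary vertex of $R'$: some $G$-edge leaves the component, and that edge lies in another region. This gives $\sum_{R'}|b(R')|=O(N/\sqrt r)$ pieces, and the bound is attained. Take $G$ to be the serpentine Hamiltonian path through a $\sqrt N\times\sqrt N$ grid, $G'$ the triangulated grid, and let your separators cut $G'$ into $\sqrt r\times\sqrt r$ squares; nothing in Theorem~\ref{cycle} prevents this. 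Each square meets $G$ in about $\sqrt r$ disjoint row segments, so you end with $\Theta(N/\sqrt r)$ regions rather than $O(N/r)$.

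Your count also never uses connectivity of $G$. Yet for a perfect matching, no division into connected regions has $o(N)$ regions, so the statement implicitly requires $G$ connected, and an argument that ignores this cannot be right as written.

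The same defect appears inside your recursion. If a piece is retriangulated arbitrarily, a separator cycle can cross one hole face along two diagonals with no common endpoint. The original edges on one side of $C$ can then split into two components joined only through that face. Since you keep only original edges, your intermediate pieces need not be regions either.

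\textbf{The missing idea is the one the paper uses.} Do not triangulate globally. At each node, triangulate only the faces of the current region $R_x$, each by a fan from a single vertex, and let the children be the edges of $R_x$ (not of $R'_x$) inside-or-on and outside-or-on $C_x$. A simple cycle uses at most two diagonals of a given fan, both at its apex. So any walk on one side of $C_x$ through added diagonals can be rerouted along the boundary walk of that face on the same side. Hence both children are connected (the Observation in the proof of Theorem~\ref{rdiv}), and no splitting into components is ever needed.

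With that change, your two-phase counting goes through for connected $G$. You still need to turn the ``constant fraction'' heuristic of Stage~2 into the usual induction: a piece with $b>c\sqrt r$ boundary vertices yields at most $A\,b/\sqrt r-1$ final pieces, as in Claim~\ref{eq:phase2-claim-final}. For a single $r$ this is simpler than the alternating recursion. The alternating recursion's advantage is a single structured tree of separator cycles, which the paper reuses in Theorem~\ref{rdiv}.
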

Let us now briefly describe the proof of Theorem \ref{fred} presented by Klein, Mozes, and Sommer in \cite{KMS13}.  The proof is based on recursively applying planar cycle-separators.  For our purposes, we encode this recursive decomposition by a rooted binary tree.  Given a planar graph $G$ embedded in the plane with all faces of size 2 or 3, a decomposition tree $\mathcal T$ of $G$ is constructed recursively as follows.
Each node \(x\in\mathcal T\) is associated with a region \(R_x\subseteq G\).  
The root of \(\mathcal T\) is the entire graph \(G\).

Given a node \(x\in\mathcal T\) with corresponding region \(R_x\subseteq G\),  we define $b(R_x)$ to be the set of vertices of $R_x$ that already lie on some separator cycle chosen on the path from the root to $x$. Suppose that, for a sufficiently large constant \(c_0>0\), we have
\[
|V(R_x)|>c_0r
\qquad\text{or}\qquad
|b(R_x)|\ge c_0\sqrt r.
\]
We then define the children of \(x\) as follows.  
First, we triangulate every face of \(R_x\) of size greater than 3, by adding edges inside the face from a single vertex.  Let $R'_x$ be the resulting graph.  
Applying Miller's cycle separator theorem (Theorem~\ref{cycle}) to \(R'_x\), we obtain a simple cycle separator \(C_x\) chosen according to the following rule: if the depth of \(x\) in \(\mathcal T\) is \(\ell\), then \(C_x\) is selected to balance vertices of \(R_x\) when \(\ell\equiv 0\pmod 2\), and to balance boundary vertices of \(R_x\) when \(\ell\equiv 1\pmod 2\).  Let us remark that $V(C_x) \subset V(R_x)$, but some of the edges of $C_x$ may not be edges in region $R_x$.  The cycle \(C_x\) partitions the faces of \(R'_x\) into two parts. Accordingly, we define two child regions \(R_{x,0}\) and \(R_{x,1}\): the region \(R_{x,0}\) is the edge-induced subgraph of \(R_x\) consisting of all edges lying inside or on \(C_x\), while \(R_{x,1}\) is the edge-induced subgraph of \(R_x\) consisting of all edges lying outside or on \(C_x\).  These regions are assigned to the two children of \(x\) in \(\mathcal T\).  
Once the algorithm terminates, the regions corresponding to the leaves of \(\mathcal T\) form the desired \(r\)-division \(\mathcal R\).  

One useful structure in the construction above is the collection of simple cycle separators \(\mathcal S\) called throughout the recursion tree \(\mathcal T\).  
That is, let
\[
\mathcal S:=\{C_x:x\text{ is an internal node of }\mathcal T\}.
\]
Thus, the pair \((\mathcal T,\mathcal S)\) gives rise to the \(r\)-division \(\mathcal R\).  
Moreover, a vertex \(v\in V(G)\) is a boundary vertex in the division \(\mathcal R\) if and only if \(v\in V(C_x)\) for some separating cycle \(C_x\in\mathcal S\).  Indeed, if
\(v\) is a boundary vertex, then \(v\) lies in two distinct leaf regions of the
recursion tree.  Consider the two root-to-leaf paths leading to these leaves,
and let \(x\) be their last common node.  Then the two leaves lie in different
child subtrees of \(x\).  Since both leaves contain \(v\), both children of
\(x\) contain \(v\).  By the construction of the two children, this can happen
only if \(v\in V(C_x)\). Conversely, if \(v\in V(C_x)\) for some separating cycle \(C_x\in\mathcal S\), we may assume without loss of generality that $x$ is the unique highest node for which \(v\in V(C_x)\) holds. Then $v$ must lie in both children of $x$, and later division can only increase or preserve the multiplicity of $v$ counted in all regions. Hence, $v$ must be shared by at least two distinct leave regions of the recursion tree. This statement also holds for our refined $r$-division argument below.

  We now prove the following variant of Frederickson's $r$-division theorem, which will be used to prove Theorem \ref{soly}.

\begin{theorem}\label{rdiv}
There exists an absolute constant $r_0>0$ such that the following holds. Let $G$ be a planar graph on $N$ vertices embedded in the plane with all faces of size 2 or 3, and let $P \subseteq V(G)$.  Then for every $r \ge r_0$ and every $t \ge 1$, there exists a division $\mathcal R$ of $G$, obtained from a recursion tree $\mathcal T$ and a collection of simple cycles $\mathcal S$, such that
\[
|\mathcal{R}| = O\!\left(\frac{N}{r} + \frac{|P|}{t}\right),
\]
and every region $R \in \mathcal{R}$ satisfies $|V(R)| \le O(r),
|b(R)| = O(\sqrt r),$ and $|(V(R)\setminus b(R))\cap P| \le O(t).$  Moreover, every simple cycle $C_x \in \mathcal{S}$ satisfies one of the following:

\begin{enumerate}
     
    \item the number of vertices of $G$ inside or on $C_x$ is at least $r/8$, or

    \item the number of boundary vertices inside or on $C_x$ is at least $\sqrt{r}/8$, or

    \item the number of points from $P$ inside or on $C_x$ is at least $t/8.$
 \end{enumerate}
\end{theorem}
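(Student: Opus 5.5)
We modify the Klein--Mozes--Sommer recursion described above by adding a third splitting criterion and a third balancing rule. A node $x$ of $\mathcal T$ is split whenever
\[
|V(R_x)|>c_0r,\qquad |b(R_x)|\ge c_0\sqrt r,\qquad\text{or}\qquad |(V(R_x)\setminus b(R_x))\cap P|>c_0t .
\]
In that case we triangulate $R_x$ to obtain $R_x'$ and apply Theorem~\ref{cycle}. The weight function now cycles with period $3$ in the depth $\ell$ of $x$: it balances vertices of $R_x$ when $\ell\equiv0$, boundary vertices $b(R_x)$ when $\ell\equiv1$, and interior points $(V(R_x)\setminus b(R_x))\cap P$ when $\ell\equiv2\pmod 3$. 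The children are defined exactly as before. When the recursion stops, every leaf satisfies $|V(R)|\le c_0 r$, $|b(R)|<c_0\sqrt r$ and $|(V(R)\setminus b(R))\cap P|\le c_0 t$, which are the three per-region bounds. The argument given before the theorem, that $v$ is a boundary vertex if and only if it lies on some $C_x\in\mathcal S$, goes through verbatim.

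\textbf{Counting regions.} Along any root-to-leaf path, each of the three quantities is multiplied by at most $2/3$ every three levels, up to an additive $O(\sqrt{|V(R_x)|})$ coming from the new cycle vertices. For the vertex count this shows that the depth below any node is $O(\log)$ of its size. For the boundary count, the KMS argument applies: the cycle adds $O(\sqrt{|V(R_x)|})$ new boundary vertices, and the vertex count shrinks geometrically. For the interior-point count, the vertices placed on $C_x$ stop being interior, so that quantity only decreases, and its balanced step reduces it by a factor $2/3$. To bound the number of leaves, I would charge each leaf to an ancestor: the lowest ancestor that was split because of one specific criterion. The standard KMS/Frederickson counting then bounds the number of splits triggered by the first two criteria by $O(N/r)$, since the total boundary $\sum|b(R)|$ over the regions at a level is $O(N/\sqrt r)$. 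For splits triggered by the $P$-criterion, I would note that every interior point of $P$ lies in exactly one region at each level. So at a fixed "resolution" the number of regions with more than $c_0t/8$ interior $P$-points is at most $8|P|/(c_0t)$. Since the count drops geometrically along paths, summing over scales gives $O(|P|/t)$ such nodes, up to a constant factor from the three-level periodicity.

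\textbf{Cycle conditions.} For the ``moreover'' part, consider $C_x\in\mathcal S$. Since $x$ was split, at least one criterion held at $x$. By balance, the side of $C_x$ opposite the heavier part contains at least a third of the relevant weight, but we need the weight \emph{inside or on} $C_x$. So I would either choose the orientation (which side counts as ``inside'' on the sphere or after an inversion) or argue directly. Balance gives that the outside has at most $2/3$ of the weight, so inside-or-on has at least $1/3$ of the weight of the quantity being balanced. If that quantity is the one that triggered the split, this is at least $c_0r/3$, $c_0\sqrt r/3$ or $c_0t/3$, which is at least $r/8$, $\sqrt r/8$ or $t/8$. The difficulty is that the quantity being balanced at depth $\ell$ may not be the one that triggered the split. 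I would handle this by not cycling blindly: at each node, balance a quantity that currently violates its threshold, choosing the type that appears earliest in the cyclic order. I would then recheck the depth/count bound with this adaptive rule, as in the KMS treatment, which also balances whichever criterion is violated. This modification, together with making the leaf-counting argument rigorous for three interacting parameters, is the main obstacle. The counting for the $P$-criterion is the most delicate part, because regions can share $P$-points on boundaries; excluding boundary points from the count is exactly what makes the disjointness argument work.
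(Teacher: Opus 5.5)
Your construction agrees with the paper: three-periodic balancing, the adaptive rule of always balancing a parameter that is still above its threshold, and the ``moreover'' part via ``strictly outside $\le 2W/3$ implies inside-or-on $\ge W/3$.'' Your dyadic count showing that only $O(|P|/t)$ nodes have more than $c_0t$ interior points of $P$ is also essentially sound, once phrased via antichains, whose interior $P$-sets are disjoint.

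\textbf{The gap is in counting the splits that balance boundary vertices.} You claim that vertex- and boundary-triggered splits number $O(N/r)$ because the total boundary over the regions at a level is $O(N/\sqrt r)$. This fails once $P$-splits are present:
\begin{itemize}
\item A separator chosen to balance $P$ in a region with $n(x)\le c_0r$ can add up to $c_1\sqrt{c_0r}$ new boundary vertices to both children, while $n$ barely decreases.
\item After about $\sqrt{c_0}/c_1$ such splits along a path, a region exceeds $c_0\sqrt r$ boundary vertices and must be split again on its boundary.
\item When $|P|/t\gg N/r$, nothing prevents the total boundary from being of order $\sqrt r\,|P|/t\gg N/\sqrt r$. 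The number of boundary splits is then only $O(N/r+|P|/t)$, not $O(N/r)$.
\end{itemize}
Even in plain KMS, the $O(N/\sqrt r)$ bound on total boundary holds only at the frontier $S_r(\hat x)$, where regions first have at most $c_0r$ vertices. Below that frontier, boundary splits keep creating boundary and are controlled by a potential induction, not by a per-level bound. You flag the interaction of the three parameters as ``the main obstacle'' but leave it unresolved, and it is precisely the nontrivial part of the theorem.

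\textbf{What is needed is an accounting that charges the boundary created by $P$-splits to the decrease of $p$.} The paper does this with a joint potential.
\begin{itemize}
\item First phase: the KMS recurrence lemma gives $|S_r(\hat x)|=O(N/r)$ and $\sum_{x\in S_r(\hat x)}b(x)=O(N/\sqrt r)$.
\item Second phase: by induction over three-level blocks, the number of leaves below each $x\in S_r(\hat x)$ is at most $\max\{1,\,b(x)/(c_3\sqrt r)+p(x)/(c_3t)-12\}$, with $c_3=c_0/200$.
\item In a non-branching block, some separator balances $b$ or $p$ while that parameter is above threshold. So the potential drops by roughly $c_0/(4c_3)=50$. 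This pays for the $O(c_1\sqrt{c_0r})$ new cycle vertices, which cost only $O(c_1/\sqrt{c_0})$ in potential, and for up to seven extra leaves.
\item In a branching block, the $-12$ per nontrivial child pays instead.
\item Summing over $S_r(\hat x)$ gives $O(N/r+|P|/t)$.
\end{itemize}
Within your framework, an equivalent fix is to run the KMS boundary induction while treating each of your $O(|P|/t)$ $P$-splits as injecting $O(\sqrt r)$ boundary. Either way, some such joint accounting must be supplied, and your claimed $O(N/r)$ bound on boundary splits must be replaced by $O(N/r+|P|/t)$.
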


The proof of Theorem~\ref{rdiv} is a straightforward adaptation of the argument of Klein, Mozes, and Sommer~\cite{KMS13}.  We include the full proof here, which requires the following lemma (see Lemma 2 in \cite{KMS13}).

\begin{lemma}[\cite{KMS13}]\label{lem:KMS-recurrence}
Let \(1/2\le \beta<1\), let \(c\) be a constant, and let \(T_r(m)\) satisfy
\[
T_r(m)\le
\begin{cases}
\rho m^\beta+\max\limits_{\{\alpha_i\}}\sum_{i=1}^8 T_r(\alpha_i m), & m>r,\\[4pt]
0, & m\le r,
\end{cases}
\]
where the maximum is taken over all \(\alpha_1,\dots,\alpha_8\) such that
\[
\alpha_i\le \frac34+\frac{c}{\sqrt m}\qquad (i=1,\dots,8),
\]
and
\[
1\le \sum_{i=1}^8 \alpha_i \le 1+\frac{c}{\sqrt m}.
\]
Then there exists a constant \(s\) such that, for every \(r\ge s\),
\[
T_r(m)=O\!\left(\frac{m}{r^{1-\beta}}\right).
\]
\end{lemma}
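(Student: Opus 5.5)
The plan is to prove by strong induction on \(m\) a bound stronger than the one claimed, namely
\[
T_r(m)\le c_1\frac{m}{r^{1-\beta}}-c_2\,m^{\beta}\qquad\text{for all } m>r,
\]
with constants \(c_1,c_2\) depending only on \(\rho,\beta,c\), and with \(r\ge s\) large. The negative term \(-c_2m^\beta\) is there to absorb the additive cost \(\rho m^\beta\) at every node of the recursion. The reason this can work is that the work is superadditive: the parts are strictly smaller than \(m\), so their \(\beta\)-powers sum to noticeably more than \(m^\beta\).

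\textbf{Concavity gain.} Let \(m\) be large and suppose \(\alpha_i\le \frac34+\frac{c}{\sqrt m}\le \frac78\) and \(\sum\alpha_i\ge1\). Since \(\beta<1\), we have \(\alpha_i^{\beta}=\alpha_i\cdot\alpha_i^{\beta-1}\ge \alpha_i(7/8)^{\beta-1}\). Hence
\[
\sum_i\alpha_i^\beta\ge (7/8)^{\beta-1}=:1+\delta,
\]
where \(\delta>0\) depends only on \(\beta\). Plugging the induction hypothesis into the recurrence then gives
\[
T_r(m)\le \rho m^\beta+c_1\frac{m}{r^{1-\beta}}\Bigl(1+\frac{c}{\sqrt m}\Bigr)-c_2(1+\delta)m^\beta .
\]

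\textbf{Error from overlap.} The extra term coming from \(\sum\alpha_i\le 1+c/\sqrt m\) is \(c_1c\,m^{1/2}r^{\beta-1}\). Because \(\beta\ge\frac12\), this is at most \(c_1c\,r^{\beta-1}m^\beta\). The induction step therefore closes provided
\[
\rho+c_1c\,r^{\beta-1}\le c_2\delta .
\]
I will choose the constants in this order. First set \(c_2:=2\rho/\delta\). Then choose \(c_1\), which is fixed by the base regime below. Finally take \(s\) large enough that \(c_1c\,s^{\beta-1}\le\rho\); this uses \(\beta<1\) in an essential way. The order matters, because \(c_1\) must not depend on \(r\).

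\textbf{Base regime and small children (main obstacle).} A child with \(\alpha_im\le r\) contributes \(T_r=0\). The induction hypothesis, however, predicts \(c_1\alpha_imr^{\beta-1}-c_2(\alpha_im)^\beta\), which may be negative, so substituting \(0\) for it loses up to \(c_2(\alpha_i m)^\beta\le c_2r^\beta\) per child.

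To handle this, I first treat \(r<m\le Kr\) for a fixed constant \(K\) directly. Each node shrinks sizes by a factor \(\le 7/8\), so the recursion has depth \(O(\log K)\). It has at most \(8^{O(\log K)}\) nodes, each costing \(\le\rho (Kr)^\beta\). Thus
\[
T_r(m)\le C_K\rho r^\beta\le c_1mr^{\beta-1}-c_2m^\beta
\]
once \(c_1\ge C_K\rho+c_2\). For \(m>Kr\), I would try to charge the small-child deficit \(8c_2r^\beta\le 8c_2K^{-\beta}m^\beta\) against part of the gain \(c_2\delta m^\beta\), choosing \(K\) with \(8K^{-\beta}\le\delta/2\). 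I expect this to be the step needing the most care. If it fails as stated, the fallback is to replace the hypothesis by \(\max\{0,\,c_1mr^{\beta-1}-c_2m^\beta\}\) and check that it is nonnegative and valid for all \(m\ge r/8\).
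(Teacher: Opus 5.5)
The paper gives no proof of this lemma; it quotes it from Klein, Mozes, and Sommer. Your plan is the standard Frederickson-style argument and is sound: you induct on $T_r(m)\le c_1mr^{\beta-1}-c_2m^\beta$, gain $c_2\delta m^\beta$ from $\sum_i\alpha_i^\beta\ge(7/8)^{\beta-1}$, and pay for the overlap using $r^{\beta-1}\to0$.

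The step you flagged does not close with the constants you wrote. For $m>Kr$ you must absorb three costs: $\rho m^\beta$, the overlap term $c_1c\,r^{\beta-1}m^\beta$, and the small-child deficit. You bound the deficit by $\tfrac{\delta}{2}c_2m^\beta$, which equals $\rho m^\beta$ when $c_2=2\rho/\delta$. The three costs then exceed the gain $c_2\delta m^\beta=2\rho m^\beta$ by exactly the overlap term. The fix is to take $c_2=4\rho/\delta$, then choose $K$, then $c_1\ge C_K\rho+c_2$, then $s$. You should also require $s\ge64c^2$, so that $\alpha_i\le7/8$ for every $m>r$. Both the concavity gain and your depth bound for $r<m\le Kr$ depend on this, and so does the well-foundedness of induction over real $m$.

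More usefully, the obstacle disappears if you linearize before treating the small children. Put $A:=c_1r^{\beta-1}m-c_2(1+\delta)m^\beta$.
\begin{itemize}
\item For a child with $\alpha_im>r$, the hypothesis together with $(\alpha_im)^\beta\ge(1+\delta)\alpha_im^\beta$ gives $T_r(\alpha_im)\le\alpha_iA$.
\item For a child with $\alpha_im\le r$, we have $T_r(\alpha_im)\le0\le\alpha_iA$.
\end{itemize}
The second case uses $A\ge0$. This holds because $m>r$ gives $c_1r^{\beta-1}m=c_1m^\beta(m/r)^{1-\beta}\ge c_1m^\beta$, so $A\ge0$ once $c_1\ge c_2(1+\delta)$. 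Summing over all children and using $\sum_i\alpha_i\le1+c/\sqrt m$ and $\sqrt m\le m^\beta$,
\[
T_r(m)\le\rho m^\beta+\Bigl(1+\tfrac{c}{\sqrt m}\Bigr)A\le c_1r^{\beta-1}m-c_2m^\beta+\bigl(\rho+c_1c\,r^{\beta-1}-c_2\delta\bigr)m^\beta .
\]
So your original choice $c_2=2\rho/\delta$ works, together with $c_1:=c_2(1+\delta)$ and $s\ge\max\{64c^2,(c_1c/\rho)^{1/(1-\beta)}\}$. This closes the induction in a single step and covers the case where every child is small. It needs neither the regime $r<m\le Kr$, nor the unrolling constant $C_K$, nor your fallback hypothesis.
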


\noindent Since we will be using Lemma \ref{lem:KMS-recurrence} verbatim, it will be more convenient to apply the cycle separator theorem, Theorem \ref{cycle}, that achieves a weaker 3/4 balance rather than 2/3.

\begin{proof}[Proof of Theorem \ref{rdiv}]
Let $G$ be a planar graph on $N$ vertices embedded in the plane with all faces of size 2 or 3, and let $P \subseteq V(G)$.  We construct a rooted binary tree \(\mathcal T\) whose nodes \(x\) correspond to connected embedded regions \(R_x\subseteq G\), with the root $\hat x$ corresponding to \(G\) itself. For each node \(x\), write
\[
n(x):=|V(R_x)|,\qquad
b(x):=|b(R_x)|,\qquad
p(x):=|(V(R_x)\setminus b(R_x))\cap P|.
\]

Let $c_0$ be a large absolute constant that will be determined later.  A node \(x \in \mathcal T\) becomes a leaf whenever all three of the following inequalities hold: 
\begin{equation}\label{eq:leaf-threshold-rdiv}
n(x)\le c_0r,\qquad
b(x)\le c_0\sqrt r,\qquad
p(x)\le c_0t.
\end{equation}
Otherwise, triangulate each face of \(R_x\) of size greater than three by adding noncrossing diagonals from a single vertex of that face, thereby obtaining a planar graph \(R_x'\) embedded in the plane with the same vertex set and every face of size \(2\) or \(3\).

Suppose \(x \in \mathcal T\) is a node at depth \(\ell\) that does not satisfy \eqref{eq:leaf-threshold-rdiv}. According to \(\ell \bmod 3\), we apply Theorem~\ref{cycle} with the following choice of weights:
\begin{itemize}
    \item if \(\ell\equiv 0 \pmod 3\) and \(n(x) > c_0r\), we assign weight \(1\) to every vertex of \(R_x\);
    \item if \(\ell\equiv 1 \pmod 3\) and \(b(x) > c_0\sqrt{r}\), we assign weight \(1\) to each boundary vertex in \(b(R_x)\) and \(0\) to all other vertices;
    \item if \(\ell\equiv 2 \pmod 3\) and \(p(x) > c_0t\), we assign weight \(1\) to each point of \((V(R_x)\setminus b(R_x))\cap P\) and \(0\) to all other vertices.
\end{itemize}

\noindent If the parameter prescribed by the congruence class of \(\ell\) is already below its threshold, that is, if \(n(x) \le c_0r\), \(b(x) \le c_0\sqrt r\), or \(p(x) \le c_0t\), then we instead choose any one of the remaining parameters that still exceeds its threshold and balance that parameter. Such a parameter must exist, since \(x\) does not satisfy \eqref{eq:leaf-threshold-rdiv}. Therefore, every internal node is split by a separator that balances a parameter still above threshold.  By Theorem~\ref{cycle}, there is an absolute constant \(c_1\) such that the resulting simple cycle separator \(C_x\) in $R'_x$ that satisfies
\[
|C_x|\le c_1\sqrt{n_x},
\]
and each side of \(C_x\) contains at most two-thirds of the balanced weight. We let \(R_{x,0}\) and \(R_{x,1}\) denote the two child regions consisting of the edges of \(R_x\) lying inside-or-on and outside-or-on \(C_x\), respectively.

\begin{observation}
    The graphs $R_{x,0}$ and $R_{x,1}$ are connected.
\end{observation}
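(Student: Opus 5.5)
The plan is to argue by induction on the depth of $x$ in $\mathcal T$, proving the stronger statement that every region $R_x$ is connected. The root region is $G$, which we may assume connected; if it is not, we treat each component separately or note that the recursion only splits connected regions. For the inductive step, assume $R_x$ is connected, and show that $R_{x,0}$ is connected; the case of $R_{x,1}$ is symmetric, with the roles of inside and outside exchanged.

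First I would check that every edge of $R_x$ lies in $R_{x,0}$ or $R_{x,1}$. An edge of $R_x$ is an edge of $R'_x$, so it does not cross $C_x$; hence it lies inside-or-on $C_x$ or outside-or-on $C_x$. Next, the connectivity argument. Take two edges $e,f$ of $R_{x,0}$. Since $R_x$ is connected, there is a walk in $R_x$ joining an endpoint of $e$ to an endpoint of $f$. Traverse this walk, and whenever it leaves the closed disk bounded by $C_x$, it does so at a vertex $u\in V(C_x)$ and re-enters at a vertex $w\in V(C_x)$. I want to replace each such excursion by a path inside-or-on $C_x$ that uses only edges of $R_x$.

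This replacement is the main obstacle, because edges of $C_x$ need not be edges of $R_x$: they may be diagonals added during triangulation. To handle it, I would use the fact that every added diagonal lies inside a face $f$ of $R_x$ of size greater than three. Such a diagonal $ab$ on $C_x$ can be rerouted along the boundary walk of $f$ in $R_x$, and the boundary of $f$ splits into two arcs from $a$ to $b$. One of these arcs lies on the inside-or-on side of $ab$, since the face $f$ is cut by the diagonal into a part inside $C_x$ and a part outside. More concretely, the fan triangulation splits $f$ into triangles, and the part of $f$ inside $C_x$ is bounded by $ab$ together with a boundary arc of $f$ lying inside-or-on $C_x$. All edges on that arc are edges of $R_x$ and hence belong to $R_{x,0}$. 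Thus every edge of $C_x$, whether original or a diagonal, can be replaced by a path of $R_{x,0}$ with the same endpoints. It follows that any two vertices of $C_x$ that appear in $R_{x,0}$ are joined in $R_{x,0}$, since we can walk along $C_x$ and substitute each edge. A small side case needs care: a vertex of $C_x$ incident to no edge of $R_{x,0}$ is simply not in $R_{x,0}$, and the walk along $C_x$ still connects the relevant vertices through the substituted arcs.

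Finally, I would splice the walk. Each excursion of the walk outside $C_x$ is replaced by a path along the inside of $C_x$ between $u$ and $w$, which yields a walk in $R_{x,0}$ between $e$ and $f$. The degenerate situation where all edges of $R_x$ lie on one side is harmless: then one child equals $R_x$ and the other consists of edges on $C_x$, which are connected by the substitution argument. Faces of size $2$ or $3$ contribute no diagonals, so they need no special treatment.
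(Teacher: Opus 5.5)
\textbf{Different route from the paper.} The paper starts from a walk in $R'_{x,0}$, the part of the triangulated graph $R'_x$ lying inside-or-on $C_x$. This is a triangulated closed disk, hence connected. The paper then replaces the portion of the walk that uses diagonals of a face $f$ by a boundary walk of $f$ on the same side of $C_x$, so a turn at a fan apex is handled as one unit. You instead induct on depth, take a walk in $R_x$ itself, and reroute only its excursions outside $C_x$ along $C_x$. This confines the diagonal issue to diagonals lying on $C_x$, and it makes explicit the connectivity of $R_x$ that the paper leaves implicit. The cost is an auxiliary claim: any two vertices of $V(C_x)\cap V(R_{x,0})$ are joined in $R_{x,0}$.

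\textbf{A false step in that auxiliary claim.} You assert that each diagonal $ab$ on $C_x$ can individually be replaced by a path of $R_{x,0}$ with the same endpoints, because $f$ is cut into one inside part and one outside part. This fails in one configuration.
\begin{itemize}
\item All diagonals of a face $f$ with boundary $v_0v_1\cdots v_k$ emanate from the apex $v_0$. So the simple cycle $C_x$ may turn at $v_0$ through two of them, $v_0v_j$ and $v_0v_{j'}$ with $j<j'$.
\item Then $f$ is cut into three pieces, not two.
\item Suppose the middle wedge (the triangles $v_0v_iv_{i+1}$, $j\le i<j'$) is the inside one. Then neither boundary arc of $f$ from $v_0$ to $v_j$ is inside-or-on $C_x$, since $v_0v_1$ and $v_kv_0$ are strictly outside.
\item In fact $v_0\notin V(R_{x,0})$, because every edge of $R_x$ at $v_0$ lies in the outer sector.
\end{itemize}
Your ``small side case'' sentence points at this situation, but its justification (``through the substituted arcs'') relies on the per-edge claim that fails.

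\textbf{The repair is short.} In this configuration, replace the two-edge subpath $v_jv_0v_{j'}$ of $C_x$ as a unit by the arc $v_jv_{j+1}\cdots v_{j'}$ of $\partial f$, which is inside-or-on $C_x$. In every other case your single-diagonal replacement is valid: either only one diagonal of $f$ lies on $C_x$, or two do and the middle wedge is outside. The exit and re-entry vertices $u,w$ of an excursion lie in $V(R_{x,0})$, so neither is such an apex. Hence walking along $C_x$ from $u$ to $w$ with these substitutions gives a walk in $R_{x,0}$, and your argument goes through.
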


\begin{proof}
Let \(R'_{x,0}\) be the subgraph of the triangulated region \(R'_x\) lying inside-or-on \(C_x\). Then \(R'_{x,0}\) is connected. Let \(u,v\in V(R_{x,0})\), and let \(\alpha\) be a walk from \(u\) to \(v\) in \(R'_{x,0}\).

We now remove the added diagonals from \(\alpha\). Whenever \(\alpha\) uses added diagonals inside a face \(f\) of \(R_x\), replace the portion of \(\alpha\) between its first and last vertices on \(\partial f\) by the corresponding boundary walk of \(f\) on the same side of \(C_x\). This is possible because \(f\) was triangulated as a fan from a single boundary vertex, so each side of the fan is bounded by added diagonals together with a boundary walk of \(f\). The replacement uses only edges of \(R_x\) lying inside-or-on \(C_x\).

After performing this replacement for every face of \(R_x\), we obtain a walk from \(u\) to \(v\) in \(R_{x,0}\). Hence \(R_{x,0}\) is connected.  Same argument follows for $R_{x,1}$.
\end{proof}

Let the division \(\mathcal R\) be the set of leaves of \(\mathcal T\), and define
\[
\mathcal S:=\{C_x:x\in\mathcal T \text{ is an internal node}\}.
\]

\noindent By construction, every region $R \in \mathcal R$ satisfies (\ref{eq:leaf-threshold-rdiv}).

Let \(v\) be any boundary vertex of the division \(\mathcal R\). Since \(v\) is a boundary vertex of the final division, it lies on some separator cycle used during the recursion. Choose the first such cycle \(C_x\in\mathcal S\) along a root-to-leaf path leading to a leaf containing \(v\).  Since \(x\) is an internal node, at least one of
\[
n(x)>c_0r,\qquad
b(x)>c_0\sqrt r,\qquad
p(x)>c_0t
\]
holds. By construction, \(C_x\) was chosen to balance one of the parameters still exceeding threshold. Hence each side of \(C_x\) contains at least one-third of that parameter. For $c_0$ sufficiently large, we have
\begin{enumerate}
    \item if \(C_x\) balances vertices, then each side of \(C_x\) contains at least \(n(x)/4\ge c_0r/4 \geq r/8\) vertices;
    \item if \(C_x\) balances boundary vertices, then each side of \(C_x\) contains at least \(b(x)/4\ge c_0\sqrt r/4\geq \sqrt{r}/4\) boundary vertices;
    \item if \(C_x\) balances points of \(P\), then each side of \(C_x\) contains at least \(p(x)/4\ge c_0t/4 \geq t/8\) such points.
\end{enumerate}

Finally, we show that $|\mathcal R| \leq O\!\left(\frac{N}{r} + \frac{|P|}{t}\right)$.  Here, we follow the outline of the proof in Klein, Mozes, and Sommer in \cite{KMS13}.  For a node \(x\) of \(\mathcal T\) and a set \(S\) of descendants of \(x\) with no ancestor-descendant pair in $S$, define
\[
L(x,S):=-\,n(x)+\sum_{y\in S} n(y).
\]
Roughly speaking, \(L(x,S)\) is the total overcount of vertices when \(R_x\) is replaced by the regions \(R_y\), \(y\in S\). For each node $x \in \mathcal T,$ let \(S_r(x)\) denote the set of descendants \(y\) of \(x\) such that $
n(y)\le c_0r$ and the parent of \(y\) has more than \(c_0r\) vertices.  Clearly, no two nodes in $S_r(x)$ are an ancestor-descendant pair.

If \(x_0,x_1\) are the two children of a node \(x\), then regardless of what $C_x$ balances, we have
\begin{align}
n(x_0)+n(x_1) &\le n(x)+c_1\sqrt{n(x)}, \label{eq:n-split-final}\\
b(x_0)+b(x_1) &\le b(x)+c_1\sqrt{n(x)}, \label{eq:b-split-final}\\
p(x_0)+p(x_1) &\le p(x). \label{eq:p-split-final}
\end{align}
Indeed, the two child regions overlap only along the separator cycle \(C_x\), whose length is at most
\(c_1\sqrt{n(x)}\), and every point of \((V(R_x)\setminus b(R_x))\cap P\) belongs to the interior of at
most one child. Hence,

\begin{equation}\label{app}
   L(x,\{x_0,x_1\}) \leq c_1\sqrt{n(x)}. 
\end{equation}

\noindent We now make the following claim.

\begin{claim}\label{eq:L-root-Sr-final}
    Let $\hat x$ be the root of $\mathcal T$.  Then there is an absolute constant $c_2 > 0$ such that 

    $$L(\hat x, S_r(\hat x)) \leq c_2\frac{N}{\sqrt{r}}.$$
\end{claim}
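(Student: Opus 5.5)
The plan is to reduce the claim to Lemma~\ref{lem:KMS-recurrence} with $\beta=1/2$, exactly as in \cite{KMS13}. The complication is that our tree interleaves three kinds of separators. Only one level in three (at most) is guaranteed to balance vertex counts, and some levels may balance $b$ or $p$ instead of $n$.

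First, I would telescope (\ref{app}). For any node $x$ and any antichain $S$ of descendants, $L(x,S)$ is bounded by the sum of $L(y,\{y_0,y_1\})\le c_1\sqrt{n(y)}$ over all internal nodes $y$ lying on the paths from $x$ to $S$, excluding $S$ itself. Thus it suffices to bound $\sum_y \sqrt{n(y)}$ over the nodes $y$ strictly above $S_r(\hat x)$. Every such node has $n(y)>c_0 r$.

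Second, I would group the tree into blocks of bounded depth. Fix a node $x$ with $n(x)>c_0 r$, and descend at most a constant number of levels, say $6$, stopping early at any node with at most $c_0r$ vertices. Among those levels there is at least one level $\ell\equiv 0\pmod 3$ at every node along each branch. At such a level the rule forces the vertex-balancing separator whenever $n>c_0 r$, since that parameter exceeds its threshold. So every branch passes through at least one vertex-balanced split (or terminates early). The resulting set of at most $2^6$ descendants $x_1,\dots,x_m$ therefore has several properties:
\begin{itemize}
\item each satisfies $n(x_i)\le \frac23 n(x)+O(\sqrt{n(x)})\le (\frac34+\frac{c}{\sqrt{n(x)}})n(x)$, using (\ref{eq:n-split-final}) along the other levels and the fact that sizes never increase beyond an additive $O(\sqrt{n})$;
\item their sizes sum to at most $n(x)+O(\sqrt{n(x)})$ by repeated (\ref{eq:n-split-final});
\item the block contributes at most $O(\sqrt{n(x)})$ to the telescoped sum, since it has constantly many internal nodes, each with size at most $n(x)+O(\sqrt{n(x)})$.
\end{itemize}
The lemma allows $8$ pieces, so I would either take blocks of depth $3$ (giving $\le 8$ pieces, which always contain one $\ell\equiv0$ level) or merge pieces appropriately. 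Depth $3$ is the natural choice: any three consecutive levels contain a level $\equiv 0 \pmod 3$. The lower bound $\sum\alpha_i\ge 1$ can be met by padding, since $T_r$ is monotone in the sense required, or by noting that every vertex survives into some child.

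Third, define $T_r(m)$ as the maximum of $\sum_y \sqrt{n(y)}$ over subtrees rooted at nodes with $n=m$, restricted to nodes above the $S_r$ frontier. The block decomposition gives $T_r(m)\le \rho\sqrt m+\sum_{i\le 8}T_r(\alpha_i m)$ with admissible $\alpha_i$, and $T_r(m)=0$ for $m\le c_0 r$. Lemma~\ref{lem:KMS-recurrence} then yields $T_r(N)=O(N/\sqrt r)$, after rescaling $r$ by $c_0$. Multiplying by $c_1$ gives the claim.

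The main obstacle is the block step. I must check carefully that the non-vertex-balancing separators in the block do not break the $\frac34$ bound, and that nodes which stop early at size $\le c_0r$ are handled correctly. The $\frac34$ bound is where the slack from $\frac23$ to $\frac34$ and the $c/\sqrt m$ term are used. Early-stopping nodes are simply treated as pieces with $T_r=0$.
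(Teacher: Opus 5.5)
Your proposal is correct and follows essentially the same route as the paper. In both, the overcount is controlled by $c_1\sqrt{n}$ per split, the tree is grouped into blocks of three levels with at most $8$ pieces, and among any three consecutive levels one is $\equiv 0\pmod 3$, hence vertex-balancing while $n>c_0r$. Both then invoke Lemma~\ref{lem:KMS-recurrence} with $\beta=1/2$ and threshold $c_0r$.

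The differences are cosmetic. The paper runs the recurrence directly on $B_r(m)=\max L(x,S_r(x))$ over nodes at levels $\equiv 0\pmod 3$, so each block begins with the vertex-balancing split and the $\frac34$ bound holds for every piece. Your unaligned blocks instead need the extra (correct) remark that early-stopped pieces with $n\le c_0r$ contribute $0$ and can be assigned harmless $\alpha_i$.
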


\begin{proof}

 For an integer \(m>c_0r\), define
\[
B_r(m):=\max\{L(x,S_r(x)): x\in\mathcal T,\ n(x)\le m,\ \ell(x)\equiv 0\pmod 3\}.
\]

\noindent We claim that
\begin{equation}\label{eq:Br-recurrence-final}
B_r(m)\le 7c_1\sqrt m+\max_{\{\alpha_i\}}\sum_{i=1}^8 B_r(\alpha_i m),
\end{equation}
where the maximum is over all \(\alpha_1,\dots,\alpha_8\) satisfying
\begin{equation}\label{eq:alpha-upper-final}
\alpha_i\le \frac34+\frac{c_1}{\sqrt m}
\qquad (i=1,\dots,8),
\end{equation}
and
\begin{equation}\label{eq:alpha-sum-final}
1\le \sum_{i=1}^8\alpha_i\le 1+\frac{7c_1}{\sqrt m}.
\end{equation}

To prove this, fix a node \(x\) with \(\ell(x)\equiv 0\pmod 3\) and \(c_0r<n(x)\le m\) such that $B_r(m) = L(x,S_r(x))$.  
Let \(y_1,\dots,y_k\) be the rootmost descendants \(y\) of \(x\) such that $n(y)\le c_0r$ or $\ell(y)-\ell(x)=3.$  Clearly \(k\le 8\).
A repeated application of (\ref{app}) gives
\[
L(x,\{y_1,\dots,y_k\})\le (k-1)c_1\sqrt{n(x)}\le 7c_1\sqrt m.
\]
Hence
\begin{equation}\label{eq:sum-yi-final}
\sum_{i=1}^k n(y_i)\le n(x)+7c_1\sqrt{n(x)}\le m+7c_1\sqrt m.
\end{equation}
For \(1\le i\le k\), define \(\alpha_i\) by \(\alpha_i m=n(y_i)\), and for \(k<i\le 8\), set \(\alpha_i=0\).
Then \eqref{eq:sum-yi-final} implies the upper bound in \eqref{eq:alpha-sum-final}.
The lower bound in \eqref{eq:alpha-sum-final} holds because every vertex of \(R_x\) belongs to at least one
of the regions \(R_{y_i}\).

Now let \(x_0,x_1\) be the two children of \(x\). Since \(\ell(x)\equiv 0\pmod 3\) and \(n(x)>c_0r\), the
separator chosen at \(x\) balances the vertices, and therefore
\[
\max\{n(x_0),n(x_1)\}\le \frac34 n(x)+c_1\sqrt{n(x)}\le \frac34 m+c_1\sqrt m.
\]
Each region \(R_{y_i}\) is contained in one of \(R_{x_0},R_{x_1}\), so
\[
n(y_i)\le \frac34 m+c_1\sqrt m,
\]
which implies \eqref{eq:alpha-upper-final}. Without loss of generality, there is an index \(0\le k'\le k\) for which every region corresponding to \(y_1,\ldots,y_{k'}\) has fewer than \(c_0r\) vertices, and every region corresponding to \(y_{k'+1},\ldots,y_k\) has more than \(c_0r\) vertices.  Hence, each node $y_{k'+1},\ldots , y_k$ are at level $0  \pmod 3 $ in the recursion tree $\mathcal{T}$. Therefore, we have
\begin{align*}
B_r(m)
&= L(x,S_r(x)) \\
&\le
L\bigl(x,\{y_1,\ldots,y_{k'}\}\bigr)
+\sum_{i=k'+1}^{k} L(y_i,S_r(y_i)) \\
&\le
7c_1\sqrt m+\sum_{i=1}^{8} B_r(\alpha_i m).
\end{align*}
This proves \eqref{eq:Br-recurrence-final}.  Since \(B_r(m)=0\) for \(m\le c_0r\), Lemma~\ref{lem:KMS-recurrence} applies to
\eqref{eq:Br-recurrence-final}. Hence there exist absolute constants \(r_0\) and \(c_2\), depending only on
\(c_0\) and \(c_1\), such that for every \(r\ge r_0\), $B_r(m)\le c_2 m/\sqrt r $ for all $m \geq 1$. Applying this with \(m=N=n(\hat x)\), we obtain $L(\hat x,S_r(\hat x))\le c_2N/\sqrt{r}.$
\end{proof}

By Claim \ref{eq:L-root-Sr-final}, the total overcount of vertices among the regions in $S_r(\hat x)$ is $O(N/\sqrt{r})$.  Therefore
\[
\sum_{x\in S_r} n(x)\le N+ O(N/\sqrt{r}) = O(N).
\]
 
\noindent  Next, we analyze $\mathcal T$ beyond the nodes in $S_r(\hat x)$ until both parameters $b(x)$ and $p(x)$ are small.    For each $x \in S_r(\hat x)$, let $S'_r(x)$ be the set of rootmost descendants $y$ of $x$ (including $x$) such that $b(y) \leq c_0\sqrt{r}$ and $p(y) \leq c_0t$. 
We set
\[
S_r':=\bigcup_{x\in S_r} S_r'(x).
\]

\noindent Notice that every node in $S'_r$ satisfies (\ref{eq:leaf-threshold-rdiv}), and therefore, are leaf vertices.    In particular, $
\mathcal R=\{R_y:y\in S_r'\}.$  We now make the following claim as shown in \cite{KMS13}.

\begin{claim}\label{eq:phase2-claim-final}
There is an absolute constant $c_3 > 0$ such that the following holds. For every node $x \in S_r(\hat x)$,   
 $$|S_r'(x)|\le
\max\left\{
1,\,
\frac{b(x)}{c_3\sqrt r}+\frac{p(x)}{c_3t}-12
\right\}.$$
\end{claim}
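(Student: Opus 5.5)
I will prove the claim by strong induction on the subtree of $\mathcal T$ rooted at $x$, using the potential
\[
\Phi(y):=\frac{b(y)}{c_3\sqrt r}+\frac{p(y)}{c_3t},
\]
with $c_3$ small compared to $c_0$. The target is $|S_r'(y)|\le\max\{1,\Phi(y)-12\}$ for every descendant $y$ of a node in $S_r(\hat x)$. The template is the second phase of the argument in \cite{KMS13}, with one extra additive term for $P$.

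The base case is when $y$ itself satisfies $b(y)\le c_0\sqrt r$ and $p(y)\le c_0t$. Then $S_r'(y)=\{y\}$, and the bound $1$ holds.

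Otherwise, $n(y)\le c_0r$ because $y$ descends from a node of $S_r(\hat x)$, so every separator below $y$ balances $b$ or $p$. I will not look at a single split, since one separator may balance the parameter we do not want. Instead I take the (at most $8$) rootmost descendants $z_1,\dots,z_k$ of $y$ that either satisfy the leaf condition or lie three levels below $y$. Within these three levels, any node still above threshold in $b$ is split at some point by a separator balancing boundary vertices, and likewise for $p$. Summing \eqref{eq:b-split-final} and \eqref{eq:p-split-final} over at most $7$ splits, each with $|C|\le c_1\sqrt{n}\le c_1\sqrt{c_0 r}$, gives
\[
\sum_i b(z_i)\le b(y)+7c_1\sqrt{c_0}\sqrt r,\qquad \sum_i p(z_i)\le p(y).
\]
Hence $\sum_i\Phi(z_i)\le \Phi(y)+7c_1\sqrt{c_0}/c_3$.

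Applying induction to each $z_i$ gives $|S_r'(y)|\le\sum_i\max\{1,\Phi(z_i)-12\}$. I then split into cases according to how many $z_i$ have $\Phi(z_i)-12\le 1$. If at least two children are "large," subtracting $12$ from each of at least two terms beats the additive loss $7c_1\sqrt{c_0}/c_3$, provided the constant $12$ is balanced against this loss; this is exactly why the statement has $-12$. Otherwise most $z_i$ are small leaves contributing $1$ each. In that case I need the fact that $y$ was above threshold, so $\Phi(y)\ge c_0/c_3$ is huge, and the balancing guarantees each side of every balanced separator receives at least a quarter of the balanced parameter. This shows $\Phi(y)-12$ exceeds the count, which is at most $8$ plus the induction terms.

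The main obstacle is the constant bookkeeping. The additive boundary growth $7c_1\sqrt{c_0}\sqrt r$ must be absorbed by the $-12$ savings. Since $c_0$ appears in that growth but the thresholds are also $c_0$, I must check that the ratio works, which means choosing $c_3$ so that $c_0/c_3$ dominates $c_1\sqrt{c_0}/c_3+20$. This is possible because $c_0\gg c_1\sqrt{c_0}$ for large $c_0$. The other delicate point is the mixed case, where $b$ is small but $p$ is large. There the $b$-balancing step is skipped, and the lower bound $p$-balanced sides $\ge p(y)/4$ must supply the needed potential. I would first try to handle this by showing every non-leaf $z_i$ carries potential at least $c_0/(4c_3)\ge 13$ in one coordinate.
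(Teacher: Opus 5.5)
Your plan is the paper's proof. The shared structure is strong induction over all descendants of $S_r(\hat x)$, grouping three levels into at most eight rootmost descendants with $\sum_i\Phi(z_i)\le\Phi(y)+7c_1\sqrt{c_0}/c_3$, and a case split on how many descendants exceed the trivial bound: the $-12$ absorbs the boundary loss when there are at least two, and $\Phi(y)\ge c_0/c_3$ handles the case of none. In the one-large-descendant case the paper (Cases 2.a/2.b) uses the balancing you invoke as an upper bound along the path, $b(z_1)\le\frac34 b(y)+c_1\sqrt{c_0r}$ or $p(z_1)\le\frac34 p(y)$. So what you need there is this drop in $\Phi(z_1)$, not your proposed lower bound on the potential of non-leaf $z_i$, which is trivially $>c_0/c_3$ and does not close that case.
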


 \begin{proof}
     
We proceed by induction of the depth of the  subtree rooted at \(x\).  For the base case when $x$ is a leaf, we have \(S_r'(x)=\{x\}\), and the claim is immediate.

For the inductive step, let us assume now that \(x\in S_r(\hat x)\) and at least one of \(b(x)>c_0\sqrt r\) or \(p(x)>c_0t\) holds.
Let \(y_1,\dots,y_k\) be the rootmost descendants \(y\) of \(x\) such that either both $
b(y)\le c_0\sqrt r$ and $p(y)\le c_0t$ hold, or $\ell(y)-\ell(x)=3.$  Clearly we have \(k\le 8\).  Without loss of generality, we can assume that 

\[
|S_r'(y_1)|\ge |S_r'(y_2)|\ge \cdots \ge |S_r'(y_k)|.
\]
Let
\[
q:=\left|\left\{i: |S_r'(y_i)|>1\right\}\right|.
\]

\noindent Iterating
\eqref{eq:b-split-final} and \eqref{eq:p-split-final} through at most three levels gives
\begin{align}
\sum_{i=1}^k b(y_i) &\le b(x)+7c_1\sqrt{c_0r}, \label{eq:sum-b-yi-final}\\
\sum_{i=1}^k p(y_i) &\le p(x). \label{eq:sum-p-yi-final}
\end{align}

\noindent \emph{Case 1.}  Suppose that $q = 0$, that is, we obtain only leaf vertices after three levels down.  Hence, \(|S_r'(x)|\le 8\). Since \(x\) is not a leaf vertex in $\mathcal T$, at least one of \(b(x)>c_0\sqrt r\) or
\(p(x)>c_0t\) holds.  By setting $c_0$ sufficiently large, $c_0 > 10^8c_1$ and $200c_3 = c_0$, we have
\[
|S'_r(x)|
\le 8
<
\frac{c_0}{c_3}-12
\le
\frac{b(x)}{c_3\sqrt r}+\frac{p(x)}{c_3t}-12.
\]

\noindent \emph{Case 2.}  Suppose that $q = 1$.  Then
\[
|S_r'(x)|=|S_r'(y_1)|+(k-1)\le |S_r'(y_1)|+7.
\]
Then we have two cases.   

\smallskip
\noindent{\it Case 2.a.} Suppose there is an ancestor $y$ of $y_1$, that is also a descendant of \(x\), such that the separator chosen for $R_y$ balances the boundary
vertices.  Then
\[
b(y_1)\le \frac34\,b(x)+c_1\sqrt{c_0r}
\qquad \textnormal{and}\qquad
p(y_1)\le p(x).
\]
By the induction hypothesis,
\[
|S_r'(y_1)|\le \frac{b(y_1)}{c_3\sqrt r}+\frac{p(y_1)}{c_3t}-12,
\]

\noindent Since $b(x) \geq c_0\sqrt{r}$, we have

\begin{align*}
|S'_r(x)|
  &\leq |S'_r(y_1)| + 7 \\
  &\le \frac{b(y_1)}{c_3\sqrt r} + \frac{p(y_1)}{c_3 t} +7 \\
  &\le \frac{\frac34\,b(x)+c_1\sqrt{c_0r}}{c_3\sqrt r} + \frac{p(x)}{c_3 t} +7 \\
  &= \frac{b(x)}{c_3\sqrt r}
     - \frac{b(x)}{4c_3\sqrt r}
     + \frac{c_1\sqrt{c_0}}{c_3}
     + \frac{p(x)}{c_3 t}
     +7 \\
  &\le \frac{b(x)}{c_3\sqrt r}
     - \frac{c_0}{4c_3}
     + \frac{c_1\sqrt{c_0}}{c_3}
     + \frac{p(x)}{c_3 t}
     +7 \\
  &\le \frac{b(x)}{c_3\sqrt r} + \frac{p(x)}{c_3 t} - 12.
\end{align*}

\noindent 

\medskip

\smallskip
\noindent{\it Case 2.b:} Suppose there is an ancestor $y$ of $y_1$, that is also a descendant of \(x\), such that the separator chosen for $R_y$ balances the points from $P$.  Then
\[
b(y_1)\le \,b(x)+c_1\sqrt{c_0r}
\qquad \textnormal{and}\qquad
p(y_1)\le \frac34p(x).
\]
By the induction hypothesis,
\[
|S_r'(y_1)|\le \frac{b(y_1)}{c_3\sqrt r}+\frac{p(y_1)}{c_3t}-12,
\]

\noindent Since $p(x) \geq c_0t$,

\begin{align*}
|S'_r(x)|
  &\leq |S'_r(y_1)|  + 7 \\
  &\le \frac{b(y_1)}{c_3\sqrt r} + \frac{p(y_1)}{c_3 t} + 7 \\
  &\le \frac{\,b(x)+c_1\sqrt{c_0r}}{c_3\sqrt r} + \frac{\frac34p(x)}{c_3 t}  + 7\\
  &= \frac{b(x)}{c_3\sqrt r}
     + \frac{c_1\sqrt{c_0}}{c_3}
     + \frac{p(x)}{c_3 t}     - \frac{p(x)}{4c_3t}
     +7 \\
  &\le \frac{b(x)}{c_3\sqrt r}
     + \frac{c_1\sqrt{c_0}}{c_3}
     + \frac{p(x)}{c_3 t}     - \frac{c_0}{4c_3}
     + 7\\
  &\le \frac{b(x)}{c_3\sqrt r} + \frac{p(x)}{c_3 t} - 12.
\end{align*}

\medskip 

\noindent \emph{Case 3.} Finally, assume that \(q\ge 2\). Then
\[
|S_r'(x)|
=
\sum_{i=1}^q |S_r'(y_i)|+(k-q).
\]
Applying the induction hypothesis to \(y_1,\dots,y_q\), and using
\eqref{eq:sum-b-yi-final} and \eqref{eq:sum-p-yi-final}, we obtain

\begin{align*}
|S_r'(x)|
  &\le
  \sum_{i=1}^q
  \left(
    \frac{b(y_i)}{c_3\sqrt r}
    +
    \frac{p(y_i)}{c_3 t}
    - 12
  \right)
  +(k-q) \\
  &\le
  \frac{b(x)+7c_1\sqrt{c_0r}}{c_3\sqrt r}
  +
  \frac{p(x)}{c_3 t}
  - 12q
  +(8-q) \\
  &=
  \frac{b(x)}{c_3\sqrt r}
  +
  \frac{7c_1\sqrt{c_0}}{c_3}
  +
  \frac{p(x)}{c_3 t}
  +
  8
  - 13q \\
  &\le
  \frac{b(x)}{c_3\sqrt r}
  +
  \frac{1400c_1}{\sqrt{c_0}}
  +
  \frac{p(x)}{c_3 t}
  - 18
  \\
  &\le
  \frac{b(x)}{c_3\sqrt r}
  +
  \frac{p(x)}{c_3 t}
  - 12.
\end{align*} \end{proof}

Using Claim \ref{eq:phase2-claim-final}, we have
\begin{align*}
|\mathcal R|
  &= |S_r'|
   = \sum_{x\in S_r(\hat x)} |S_r'(x)| \\
  &\le \sum_{x\in S_r(\hat x)}
     \left(
       1+\frac{b(x)}{c_3\sqrt r}+\frac{p(x)}{c_3t}
     \right) \\
  &= |S_r(\hat x)|
     + \frac{1}{c_3\sqrt r}\sum_{x\in S_r(\hat x)} b(x)
     + \frac{1}{c_3t}\sum_{x\in S_r(\hat x)} p(x).
\end{align*}
As each parent of a node in \(S_r(\hat x)\) has more than \(c_0r\) vertices and the total overcount of vertices among them is $O(N)$ by Claim \ref{eq:L-root-Sr-final}, we have
\[
|S_r(\hat x)|=O\!\left(\frac{N}{r}\right).
\]

\noindent Clearly, $\sum_x p(x) \leq |P|$. Each vertex that belongs to at least two regions in \(S_r(\hat x)\) contributes once to the boundary set of each such region, and contributes one less than this amount to the overcount \(L(\hat x,S_r(\hat x))\). Hence, Claim \ref{eq:L-root-Sr-final} implies that
\[
\sum_{x\in S_r(\hat x)} b(x)\le 2L(\hat x,S_r(\hat x)) = O(N/\sqrt{r}).
\]

\noindent Putting everything together gives $
|\mathcal R| = O\!\left(\frac{N}{r}+\frac{|P|}{t}\right).$  This completes the proof.

\end{proof}

\section{Dense arrangements are locally dense}\label{sec:dense}

Throughout the remainder of the paper, we adopt the following notation. 
Let $P$ be a set of points and $\mathcal{C}$ be a collection of $k$-intersecting curves in the plane. 
For a point $p \in P$, let $|p|$ denote the number of curves in $\mathcal{C}$ incident to $p$. 
Likewise, for a curve $\gamma \in \mathcal{C}$, let $|\gamma|$ denote the number of points of $P$ incident to $\gamma$.

We begin by recalling the following lemmas, all essentially due to Pach and Sharir. The first is a refined version of Theorem~\ref{PS} that follows directly from their proof.

\begin{lemma}[Pach--Sharir]\label{PSlemma}
Let \(P\) be a finite point set and $\mathcal C$ be a finite collection of $k$-intersecting curves.  Suppose there are $m$ crossing points between all pairs of curves in $\mathcal C$ outside of $P.$
Then
\[
I(P,\mathcal C)
\le
O_k\left(
|P|^{\frac{k + 1}{2k + 1}}
m^{\frac{k}{2k + 1}}
+|P|+|\mathcal  C|
\right).
\]
Since we have $m \leq k\binom{|\mathcal C|}{2}$, we obtain
\[
I(P,\mathcal{C}) = O_k\left(
|P|^{\frac{k + 1}{2k + 1}}
|\mathcal C|^{\frac{2k}{2k + 1}}
+|P|+|\mathcal  C|
\right).
\]

\end{lemma}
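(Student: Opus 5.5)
We follow the Sz\'ekely crossing-lemma scheme, as in Pach and Sharir's argument. First we discard curves meeting at most $k$ points of $P$. Each of them contributes at most $k$ incidences, so together they account for $O_k(|\mathcal C|)$ incidences. Likewise we may assume every point lies on at least one curve.

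For each remaining curve $\gamma$ we list its points of $P$ in order along $\gamma$, namely $p_1,\dots,p_{|\gamma|}$. We join $p_j$ to $p_{j+k}$ by an edge drawn along the arc of $\gamma$ between them, so that edges come only from within a single curve. This gives a multigraph $G$ on vertex set $P$ with roughly $\sum_\gamma(|\gamma|-k)\ge I-k|\mathcal C|$ edges, drawn in the plane. Any point of the plane is covered by at most $k$ edges coming from the same curve, and edges from the same curve do not cross one another.

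We need two things from this drawing.

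\emph{Edge multiplicity.} Consider two vertices $u,v$. Suppose the pair is joined by edges from many distinct curves. These curves all pass through both $u$ and $v$. The key step is then to bound the multiplicity of $\{u,v\}$ using the $k$-intersecting property, which should give a multiplicity bound of order $O(k)$ "up to the structure". This is exactly where Pach and Sharir use that each edge skips $k-1$ points. If $\gamma,\gamma'$ both contain $u,v$ and both contain the $k-1$ points strictly between them, then the two curves share at least $k+1$ points, a contradiction. So two such edges give distinct pairs $(\gamma,\text{interior point sets})$.

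I expect this to be the main obstacle. Rather than bounding multiplicity directly, I would bound the crossings by a weighted count. The crossing lemma for multigraphs with edge multiplicity at most $\mu$ gives $\mathrm{cr}\ge c\,E^3/(\mu V^2)$. Alternatively, we can avoid multiplicities entirely by using the classical bound: define edges so that the multiplicity is at most $O(1)$ after deleting a constant fraction of them.

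\emph{Crossings.} Crossings in the drawing occur either at crossing points of two curves outside $P$ or at points of $P$. We first perturb the drawing near points of $P$ so that no crossings occur there, for instance by separating the edges along a small disk. A crossing point of two curves outside $P$ is covered by at most $k$ edges from each of the two curves, so it yields at most $k^2$ edge crossings. Hence $\mathrm{cr}(G)\le k^2 m$.

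Combining these, the crossing lemma with $E\ge I-k|\mathcal C|$, $V=|P|$, and bounded multiplicity gives either $E\le O(|P|)$ or
\[
\frac{E^3}{|P|^2}\le O_k(m).
\]
The latter would give only an exponent $1/3$. The stated exponent $\frac{k}{2k+1}$, however, indicates a better multiplicity bound: the multiplicity of a pair $\{u,v\}$ is controlled by how many curves share the $k+1$ points $u,v$ and the points in between. So I would instead use the refined crossing lemma of Pach and Sharir for graphs without $K_{k+1,\,\cdot}$-type structure, namely $\mathrm{cr}\ge c\,E^{2+1/k}/V^{1+1/k}$. Solving
\[
E^{(2k+1)/k}\le O_k\bigl(m\,|P|^{(k+1)/k}\bigr)
\]
gives
\[
E\le O_k\bigl(|P|^{\frac{k+1}{2k+1}}\,m^{\frac{k}{2k+1}}\bigr).
\]
Adding back $|P|$ and $k|\mathcal C|$ yields the first bound. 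The second bound follows from $m\le k\binom{|\mathcal C|}{2}$.
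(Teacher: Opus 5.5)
Your proposal has a genuine gap at the step that carries the content of the lemma for $k\ge 2$: the edge multiplicities are never controlled.

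\textbf{Multiplicity.} The observation that two parallel $u$--$v$ edges come from distinct pairs (curve, interior set) bounds nothing. Different curves through $u$ and $v$ may have different, even disjoint, sets of $k-1$ intermediate points. So the multiplicity of $\{u,v\}$ can be as large as the number of curves through both $u$ and $v$.

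The ``refined crossing lemma'' $\mathrm{cr}\ge cE^{2+1/k}/V^{1+1/k}$ that you then invoke is not an available result. You also neither formulate nor verify a hypothesis on your $G$ that would yield it. For $E\ge V$ it is \emph{weaker} than the simple-graph bound $E^3/V^2$, by a factor $(E/V)^{1-1/k}$. It is exactly what the multigraph crossing lemma $\mathrm{cr}\ge cE^3/(\mu V^2)$ gives when $\mu=O\bigl((E/V)^{1-1/k}\bigr)$. So invoking it amounts to assuming the multiplicity control that has to be proved. This is also where the hypothesis that $k+1$ points lie on at most one curve must genuinely enter, and your argument never uses it.

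Your remark that bounded multiplicity ``would give only an exponent $1/3$'' is backwards. Bounded multiplicity would give $E=O(|P|^{2/3}m^{1/3})$, which is stronger than the target whenever $m\ge |P|$.

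\textbf{Crossing count.} The bound $\mathrm{cr}(G)\le k^2 m$ fails for your skip-$k$ graph. An edge $p_jp_{j+k}$ contains $k-1$ points of $P$ in its interior. If two curves cross (rather than touch) at a point $p\in P$ that is interior to edges of both, those edges cross at $p$, and no local perturbation removes this crossing. Moreover, an edge passing through a vertex must be rerouted around it, which creates crossings with the edges incident to $p$. None of these crossings is among the $m$ crossing points outside $P$.

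Using $\sum_p\binom{|p|}{2}\le k\binom{|\mathcal C|}{2}$, you only get $\mathrm{cr}(G)=O_k(m+|\mathcal C|^2)$. That would be harmless for the second inequality but not for the first. The first is the one the paper actually uses, to deduce $m=\Omega_k(\varepsilon^{(2k+1)/k}n^2)$.

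Sz\'ekely's consecutive-point graph, which underlies the Pach--Sharir argument the paper cites, has no point of $P$ in the interior of any edge. Every edge crossing there therefore occurs at one of the $m$ crossing points outside $P$, at most one per crossing pair of curves. That is why the $m$-refinement follows from their proof. You should keep that graph and supply an actual argument handling pairs $p,q$ that are consecutive on many curves.
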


\noindent An easy consequence of Lemma \ref{PSlemma} is the following.

\begin{lemma}\label{smalldeg}
Let $P$ be a set of points and $\mathcal C$ be a collection of $k$-intersecting curves in the plane. For $\ell \ge 2$,  we have

$$\sum\limits_{p\in P, |p| \geq \ell}|p| \leq  O_k\!\left(
\frac{|\mathcal C|^2}{\ell^{1+1/k}}
+
|\mathcal C|
\right).$$

\end{lemma}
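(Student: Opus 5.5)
Let $P_\ell=\{p\in P: |p|\ge \ell\}$; the plan is to apply Lemma~\ref{PSlemma} to the pair $(P_\ell,\mathcal C)$ and then trade incidences against the size of $P_\ell$. First note that, trivially,
\[
I(P_\ell,\mathcal C)=\sum_{p\in P_\ell}|p|\ \ge\ \ell|P_\ell|.
\]
On the other hand, the second form of Lemma~\ref{PSlemma} (with $m\le k\binom{|\mathcal C|}{2}$) gives
\[
I(P_\ell,\mathcal C)\le c\left(|P_\ell|^{\frac{k+1}{2k+1}}|\mathcal C|^{\frac{2k}{2k+1}}+|P_\ell|+|\mathcal C|\right)
\]
for a constant $c=c_k$. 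Write $I=I(P_\ell,\mathcal C)$. From the first inequality, $|P_\ell|\le I/\ell$, and we substitute this into the right-hand side.

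Next we split according to which of the three terms dominates. If $I\le 3c|\mathcal C|$, we are done. If $I\le 3c|P_\ell|\le 3cI/\ell$, then $\ell\le 3c$, and in this range we can use the trivial bound $I\le k\binom{|\mathcal C|}{2}\cdot 2+|\mathcal C|$. This needs a little care: through a point of degree $\ge2$, each pair of curves meeting there is counted once, so $\sum_{|p|\ge 2}|p|\le 2\sum_p\binom{|p|}{2}\le k|\mathcal C|^2$. Since $\ell$ is bounded by a constant, this is $O_k(|\mathcal C|^2/\ell^{1+1/k})$. In the remaining case,
\[
I\le 3c\,(I/\ell)^{\frac{k+1}{2k+1}}|\mathcal C|^{\frac{2k}{2k+1}}.
\]
Raising to the power $2k+1$ gives
\[
I^{2k+1}\le (3c)^{2k+1} I^{k+1}\ell^{-(k+1)}|\mathcal C|^{2k},
\]
hence $I^k\le O(|\mathcal C|^{2k}\ell^{-(k+1)})$, that is, $I=O_k(|\mathcal C|^2/\ell^{1+1/k})$.

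The main (mild) obstacle is the middle case, where the linear term $|P_\ell|$ dominates. The resolution is to observe that domination by this term forces $\ell=O_k(1)$, and then to bound the sum directly by counting pairs of curves through each point, using $k$-intersection. The rest is the exponent arithmetic above, which combines the three cases into the stated bound.
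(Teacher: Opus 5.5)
Your proof is correct and follows the paper's argument. You apply Lemma~\ref{PSlemma} to $P_{\ge\ell}$, substitute $|P_{\ge\ell}|\le I/\ell$, handle bounded $\ell$ by the trivial pair-counting bound coming from $k$-intersection, and raise the remaining inequality to the power $(2k+1)/k$. The differences are organizational: the paper first restricts to $\ell>2c'_k$ and absorbs the $|P_{\ge\ell}|$ term into the left-hand side, whereas you reach bounded $\ell$ as one branch of a three-way dominant-term split, and you spell out the bound $\sum_{|p|\ge 2}|p|\le 2\sum_p\binom{|p|}{2}\le k|\mathcal C|^2$ that the paper leaves implicit.
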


\begin{proof} 
Let $c'_k > 0$ be the implicit constant in Lemma \ref{PSlemma}.  If \(\ell\le 2c'_k\), then the bound is trivial since any two curves have at most $k$ points in common.   Thus we may assume that \(\ell>2c'_k\).
Since every point of \(P_{\ge \ell}\) is incident to at least \(\ell\) curves of \(\mathcal C\), Lemma~\ref{PSlemma} gives
\[
\ell |P_{\ge \ell}|
\le
I
\le
c'_k\!\left(
|P_{\ge \ell}|^{\frac{k+1}{2k+1}}|\mathcal C|^{\frac{2k}{2k+1}}
+
|P_{\ge \ell}|
+
|\mathcal C|
\right),
\]
where 
\[
I=I(P_{\ge \ell},\mathcal C)=\sum\limits_{p\in P, |p| \geq \ell}|p|. 
\] 
Then we have
\[
|P_{\ge \ell}| \leq \frac{I}{\ell}
\qquad\text{and}\qquad
\frac{I}{2} \leq c'_k\!\left(
(\frac{I}{\ell})^{\frac{k+1}{2k+1}}|\mathcal C|^{\frac{2k}{2k+1}}
+
|\mathcal C|
\right). 
\]
If $\frac{I}{4} \le c'_k|\mathcal C|$, then we are done. So let us assume that 
\[
\frac{I}{4} \leq c'_k (\frac{I}{\ell})^{\frac{k+1}{2k+1}}|\mathcal C|^{\frac{2k}{2k+1}}. 
\]
Raising both sides to the power \((2k+1)/k\) and cancelling, we obtain $I = O_k\!\left(\frac{|\mathcal C|^2}{\ell^{1+1/k}}\right).$ Together with the bound from previous case, the statement follows.
\end{proof}

Next, we will need the hypergraph removal lemma. For every fixed $k$-uniform hypergraph $\mathcal H$, any $k$-uniform hypergraph on $n$ vertices containing $o(n^{v(\mathcal H)})$ copies of $\mathcal H$ can be made $\mathcal H$-free by removing $o(n^k)$ edges. This was first proved independently by Gowers~\cite{Gowers2007} and by Nagle, R\"odl, Schacht, and Skokan~\cite{NagleRodlSchacht2006}, building on the hypergraph regularity method (see also \cite{RodlSkokan2004,RodlSchacht2009,Tao2006}). We will use the following standard supersaturation-type reformulation.

\begin{lemma}\label{lem:supersat-hyper}
For every $k\ge 2$, $s\ge k+1$, and $\delta>0$ there exists
$\alpha=\alpha(k,s,\delta)>0$ such that the following holds.
If $\mathcal G$ is a $(k + 1)$-uniform hypergraph on $N$ vertices with at least
$\delta N^{k + 1}$ edge-disjoint copies of $K^{(k + 1)}_s$, then $\mathcal G$
contains at least $\alpha N^s$ copies of $K^{(k + 1)}_s$.
\end{lemma}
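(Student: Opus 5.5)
The plan is to deduce Lemma~\ref{lem:supersat-hyper} directly from the hypergraph removal lemma, applied with $\mathcal H=K^{(k+1)}_s$ in its $(k+1)$-uniform form, by contradiction. Fix $k$, $s$, $\delta$. The removal lemma, stated quantitatively, says: for every $\varepsilon>0$ there is $\alpha_0(\varepsilon)>0$ such that any $(k+1)$-uniform hypergraph on $N$ vertices with fewer than $\alpha_0 N^s$ copies of $K^{(k+1)}_s$ can be made $K^{(k+1)}_s$-free by deleting at most $\varepsilon N^{k+1}$ edges. (This is equivalent to the ``$o(n^{v(\mathcal H)})$ copies implies $o(n^k)$ removal'' formulation quoted above, by the usual compactness/contrapositive argument.) I will take $\varepsilon=\delta/2$ and set $\alpha=\alpha_0(\delta/2)$.

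Now suppose $\mathcal G$ contains a family $\mathcal F$ of at least $\delta N^{k+1}$ pairwise edge-disjoint copies of $K^{(k+1)}_s$, but fewer than $\alpha N^s$ copies of $K^{(k+1)}_s$ in total. By the removal lemma there is a set $E'$ of at most $(\delta/2)N^{k+1}$ edges whose deletion destroys every copy of $K^{(k+1)}_s$. In particular each copy in $\mathcal F$ must lose at least one edge. Since the copies in $\mathcal F$ are edge-disjoint, a single deleted edge can destroy at most one member of $\mathcal F$. Hence $|E'|\ge|\mathcal F|\ge\delta N^{k+1}>(\delta/2)N^{k+1}$, which is a contradiction. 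Therefore $\mathcal G$ contains at least $\alpha N^s$ copies.

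The only delicate point is the conversion of the asymptotic $o(\cdot)$ form of the removal lemma into the quantitative $\varepsilon$--$\alpha_0$ form. I would argue this by contradiction. If the quantitative form failed for some $\varepsilon$, there would be a sequence of hypergraphs $\mathcal G_j$ on $N_j$ vertices with fewer than $N_j^s/j$ copies of $K^{(k+1)}_s$, each requiring more than $\varepsilon N_j^{k+1}$ deletions. The $N_j$ must tend to infinity: for bounded $N$, taking $\alpha_0<N^{-s}$ forces zero copies, so no deletions are needed. These $\mathcal G_j$ then violate the $o$-statement. Edge cases such as small $N$ are absorbed into the choice of $\alpha$, since with $\delta N^{k+1}\ge 1$ copies present one can shrink $\alpha$ so that $\alpha N^s\le 1$ for the finitely many small $N$.
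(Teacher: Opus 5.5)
Your proof is correct and matches the paper's approach. The paper gives no separate proof and only calls the lemma a standard reformulation of the hypergraph removal lemma; your argument supplies that deduction. Each of the $\delta N^{k+1}$ edge-disjoint copies needs its own deleted edge, which contradicts a removal set of size $(\delta/2)N^{k+1}$, and you correctly convert the $o(\cdot)$ form of removal to its quantitative $\varepsilon$--$\alpha_0$ form.
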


We are now ready to prove Theorem~\ref{soly}.  Throughout the proof, we use the \(O(\cdot)\) and \(\Omega(\cdot)\) notation to suppress only those constants that have already been fixed, either earlier in the proof or in previous sections. In particular, any constant that is to be chosen later will always remain explicit until it is determined.

\begin{proof}[Proof of Theorem~\ref{soly}]
Set $T=n^{\frac{1}{2k+1}}$. Let $P$ be a set of $n$ points and $\mathcal{C}$ be a collection of $n$ $k$-intersecting curves in the plane such that $I(P,\mathcal{C}) \geq \varepsilon n^{\frac{3k + 1}{2k + 1}}=\varepsilon \frac{n^2}{T^{k+1}}$, where $n > n_0,T>n^{\frac{1}{2k+1}}_0$ and $n_0$ is a sufficiently large constant that depends only on $\varepsilon, k, s$. We will show that, for sufficiently large \(n_0\), there exist \(s\) points of \(P\) such that every \((k + 1)\)-subset of them lies on a distinct curve of \(\mathcal C\).

Without loss of generality, we can assume that every point in $P$ lies on at least one curve in $\mathcal{C}$, and each curve in $\mathcal{C}$ has at least one point from $P$ on it.  
Moreover, we can assume that the intersection graph of $\mathcal{C}$ is connected, since otherwise, we could pull apart the arrangement and work in each connected component individually.  

Set \(\ell=\frac{c_4 }{\varepsilon^{k/(k+1)}}T^k\), where \(c_4\) is a sufficiently large absolute constant to be chosen momentarily. Let \(Q\subset P\) be the set of points incident to at least \(\ell\) curves of \(\mathcal C\). Choosing \(c_4\) sufficiently large and applying Lemma~\ref{smalldeg}, we may ensure that the number of incidences between \(Q\) and \(\mathcal C\) is at most

$$I(Q,\mathcal C) \leq \sum\limits_{p \in Q}|p| \leq O\left(\frac{n^2}{\ell^{1 + 1/k}} + n\right) \leq \frac{\varepsilon}{2} \frac{n^2}{T^{k+1}}.$$  Thus, for $P'   = P\setminus Q$, the number of incidences between $P'$ and $\mathcal C$ is at least $\frac{\varepsilon}{2} \frac{n^2}{T^{k+1}}$.  Moreover, each point $p \in P'$ is incident to at most $\ell$ curves in $\mathcal{C}$.  Let us now restrict our attention to the arrangement $(P', \mathcal C)$.  By locally perturbing the curves in $\mathcal C$, without loss of generality, we can assume that no three curves in $\mathcal{C}$ have a common point outside of $P'$.
Let $m$ denote the number of crossing points among the curves in $\mathcal{C}$ outside of $P'$.  
By Lemma~\ref{PSlemma}, we have $m=\Omega_k\!\left(\varepsilon^{\frac{2k+1}{k}}n^2\right)$.  

From the arrangement $(P',\mathcal{C})$, consider the planar graph $G_0$ embedded in the plane whose vertex set consists of the $m$ crossing points together with the points of $P'$, and whose edges are consecutive vertices along a curve in $\mathcal{C}$.  
We then modify $G_0$ as follows.  For each point $p \in P'$, we add $w = \varepsilon^{(2k + 1)/k}T^{2k+1}/\ell=\varepsilon^{(2k + 1)/k}n/\ell$ nested cycles centered at $p$, and very close to $p$, such that each such cycle has length $2|p|$ and the resulting graph remains planar.   See Figure~\ref{X}.

\begin{figure}
    \centering
    \includegraphics[scale=.35]{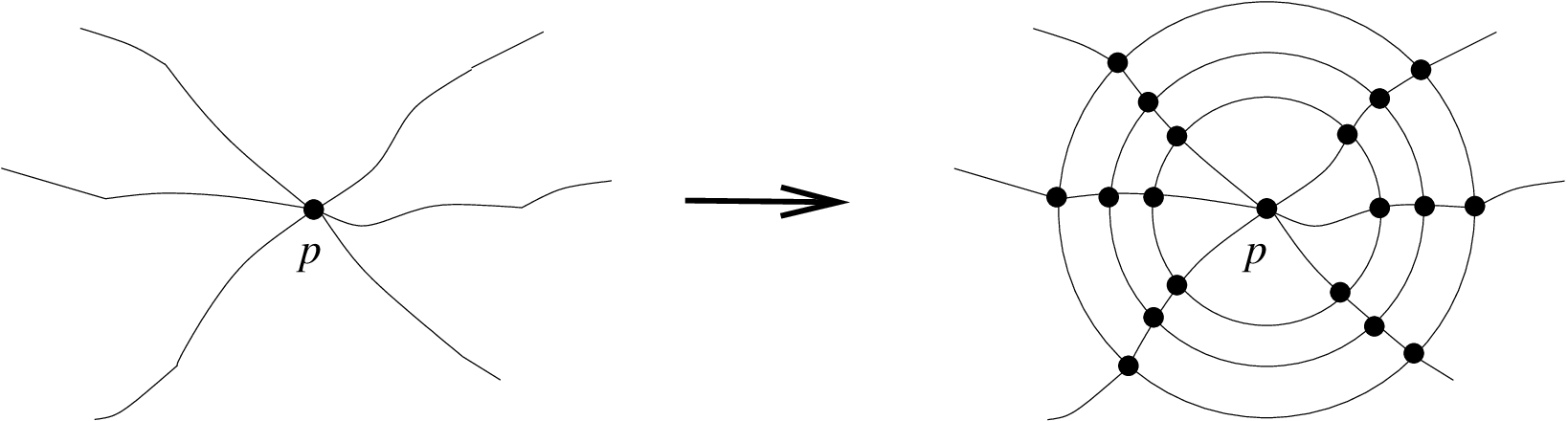}
    \caption{The local modification near $p$.}
    \label{X}
\end{figure}

After performing this operation for every $p \in P'$, and since $|p|\leq \ell$,  we will have added at most an additional $2\varepsilon^{\frac{2k+1}{k}}n^2$ vertices to $G_0$. We then triangulate every face of size greater than three in the resulting graph, obtaining a planar graph \(G=(V,E,F)\) embedded in the plane in which every face has size \(2\) or \(3\), and
\[
|V| = m + 2\varepsilon^{\frac{2k+1}{k}}n^2 + |P'| = O(m).
\]

\noindent  Note that $ m \leq kn^2$.  We now apply Theorem~\ref{rdiv} to $G$ and the  point set $P'\subset V(G)$, with parameters

\[
r = c_5\frac{k^2s^2}{\varepsilon^2}T^{2k+2}, 
\qquad
t =  c_5\frac{ks^2}{\varepsilon^2}T,
\]

\noindent where $c_5$ is a large absolute constant that will be chosen later.  Hence, we obtain a division $\mathcal R$ such that

$$|\mathcal R| = O\left( \frac{|V|}{r } + \frac{n}{t}\right) =  
O\!\left(
\frac{\varepsilon^2}{c_5ks^2}
\frac{n^2}{T^{2k+2}}
\right).$$

\noindent and for each region $R_i \in \mathcal R$ we have

\[
|V(R)|\le r,\qquad |b(R)|=O(\sqrt r), \qquad 
  |(V(R)\setminus b(R))\cap P'| \le t.
\]

\noindent  Moreover, there is a collection of simple cycles (simple closed curves in the plane) $\mathcal S$ such that for each boundary vertex $v$, there is a simple cycle $C_x \in \mathcal{S}$ that contains $v$, such that either

\begin{enumerate}
     
    \item the number of vertices of $G$ inside or on $C_x$ is at least $r/8$, or

    \item the number of boundary vertices inside or on $C_x$ is at least $\sqrt{r}/8$, or

    \item the number of points from $P$ inside or on $C_x$ is at least $t/8.$
 \end{enumerate}

Let \(Q' \subset P'\) be the set of boundary vertices of the division \(\mathcal R\).  That is, \(p\in Q'\) implies that \(p\) lies on some cycle \(C_i\in\mathcal S\). Fix \(p\in Q'\), and let \(Z_1,\dots,Z_w\) be the $w$ nested cycles constructed around $p$.  The cycles are ordered from inside to outside, so that \(Z_1\) is the innermost and \(Z_w\) the outermost. Let \(G_p\) be the subgraph of \(G\) induced by \(p\) together with the vertices of \(Z_1,\dots,Z_w\).  See Figure \ref{X}.

\begin{observation}
If $p \in Q'$, then at least  $c_6T^{k + 1}$ vertices from $G_p$ are also boundary vertices, where $c_6 = c_6(k,\varepsilon)$.
\end{observation}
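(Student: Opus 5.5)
The plan is to look at the separator cycle $C_x\in\mathcal S$ through $p$ and follow it as it leaves $p$. Fix $p\in Q'$ and choose $C_x$ containing $p$ and satisfying one of the three alternatives of Theorem~\ref{rdiv}. Since $C_x$ is a simple closed curve through $p$, it has to escape the nested family $Z_1,\dots,Z_w$. There are two cases: either $C_x$ eventually reaches a vertex outside $Z_w$, or the whole cycle $C_x$ stays inside $Z_w$.

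In the first case, note that $p$ is strictly inside each $Z_j$ and $C_x$ is connected. Hence $C_x$ meets every $Z_j$, so it uses at least one vertex of each $Z_j$. Every vertex of a separator cycle is a boundary vertex, by the characterization recalled before Theorem~\ref{rdiv}. So $G_p$ contains at least $w$ boundary vertices. Since $|p|\le \ell=c_4\varepsilon^{-k/(k+1)}T^k$, we get
\[
w=\varepsilon^{(2k+1)/k}T^{2k+1}/\ell=\Omega_{k,\varepsilon}(T^{k+1}),
\]
which is the claimed bound with $c_6$ depending on $k$ and $\varepsilon$.

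In the second case, $C_x$ lies entirely within the closed disk bounded by $Z_w$. I would then check that none of the three alternatives can hold, which gives a contradiction.
\begin{enumerate}
\item The first alternative fails. Every vertex inside or on $C_x$ lies in $G_p$, possibly together with a few crossing or triangulation vertices near $p$; I would arrange the construction so that there are none. So the number of such vertices is at most $1+2|p|w\le 1+2\varepsilon^{(2k+1)/k}n$. Since $n=T^{2k+1}$, this is less than $r/8=c_5k^2s^2\varepsilon^{-2}T^{2k+2}/8$ for $c_5$ large.
\item The third alternative fails. The only point of $P'$ inside or on $C_x$ is $p$, and $1<t/8$.
\item The second alternative is the delicate one. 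If at least $\sqrt r/8$ boundary vertices lie inside or on $C_x$, then they all lie in $G_p$. Now $\sqrt r/8=\Omega(T^{k+1})$ with a constant depending only on $c_5,k,s,\varepsilon$, so $G_p$ already contains $\Omega(T^{k+1})$ boundary vertices, which is exactly the conclusion we want.
\end{enumerate}
So in the second case the second alternative must hold, and it directly yields the claim. In every case $G_p$ contains at least $\min\{w,\sqrt r/8\}\ge c_6T^{k+1}$ boundary vertices.

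The main obstacle is the geometric bookkeeping in the second case, namely showing that "inside or on $C_x$" only sees vertices of $G_p$. Some edges of the triangulated graph $R'_x$ may be added diagonals. One must check that a simple cycle confined to the disk of $Z_w$ encloses only vertices of $G_p$, using planarity and the fact that the $Z_j$ hug $p$ closely. One also has to fix which side is "inside" when the cycle surrounds the unbounded face; I would treat this by taking the side not containing the rest of the graph. If this becomes messy, a fallback is to apply the first-case argument to each $Z_j$ that $C_x$ crosses. Each crossed $Z_j$ contributes a boundary vertex, so either $C_x$ crosses all $w$ of them or it is trapped in a small disk where the counting above applies.
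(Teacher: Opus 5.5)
\textbf{There is a genuine gap in your first case.} Your second case is fine. It matches the paper's Case 1: for a cycle confined to the disk of $Z_w$, alternatives (1) and (3) fail, so (2) gives $\sqrt r/8=\Omega(T^{k+1})$ boundary vertices in $G_p$.

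The first case fails at the step ``$C_x$ meets every $Z_j$, so it uses at least one vertex of each $Z_j$.'' The cycle $C_x$ is a cycle of the triangulated graph $R'_x$, not of $G$. The diagonals added when triangulating $R_x$ are drawn inside faces of $R_x$, and those faces may contain edges of $G$ that earlier separators already cut away from $R_x$. If some edges of $Z_j$ are missing from $R_x$, a diagonal on $C_x$ can cross the curve $Z_j$ at a point that is not a vertex. In that situation the Jordan-curve argument produces no vertex of $Z_j$ on $C_x$. The implication you use is valid only when $Z_j\subseteq R_x$, because then $Z_j$ and $C_x$ are both subgraphs of the plane graph $R'_x$. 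Your fallback (``each crossed $Z_j$ contributes a boundary vertex'') relies on the same implication. So the diagonal issue you raised actually breaks the first case, not the second. Relatedly, your case split should be geometric, namely whether the curve $C_x$ leaves the closed disk of $Z_w$. A diagonal between two vertices of $Z_w$ can run outside that disk, so ``some vertex outside $Z_w$'' versus ``inside $Z_w$'' is not exhaustive.

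\textbf{The missing idea} is to treat the cycles $Z_j\not\subseteq R_x$ separately, as the paper does.
\begin{enumerate}
\item Let $i$ be maximal with $Z_1,\dots,Z_i\subseteq R_x$.
\item For $j>i$, take the first node on the root-to-$x$ path whose region no longer contains all of $Z_j$, and let $z_j$ be its parent. Since $Z_j\subseteq R_{z_j}$ and $p\in R_x\subseteq R_{y_j}$, the separator $C_{z_j}$ either shares a vertex with $Z_j$, which is then a boundary vertex, or lies entirely inside $Z_j$.
\item The second possibility is handled by your small-cycle argument. You must apply it to \emph{every} separator cycle lying inside some $Z_i$, not only to cycles through $p$. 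This is why the paper's Case 1 quantifies over all $C_x\in\mathcal S$.
\item For $j\le i$, your crossing argument is valid.
\end{enumerate}
Together these give $(w-i)+i=w$ distinct boundary vertices in $G_p$.
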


\begin{proof}
 
The proof falls into two cases.

\medskip
\noindent\emph{Case 1.} Suppose that there exists a node \(x\in\mathcal T\) such that the separating cycle $C_x$ lies inside of $Z_i$ for some $i\geq 1$. That is, the simple closed curve in the plane corresponding to $C_x$ lies inside the simple closed curve corresponding to $Z_i$.  Hence, $V(C_x) \subset V(G_p)$.   Since \(G_p\) has \(1+2w|p|\) vertices and \(|p|\le \ell\), we have
\[
|V(G_p)| \le 1+2w\ell = O\!\left(\varepsilon^{(2k+1)/k}T^{2k+1}\right).
\]
For \(T\) sufficiently large,
\[
|V(G_p)|<\frac r8,
\qquad\text{and}\qquad
|V(G_p)\cap P'|=1<\frac t8.
\]
Therefore, property (2) in Theorem \ref{rdiv} must hold, and the number of boundary vertices inside or on \(C_x\) is at least $\frac{\sqrt r}{8}
=
\frac{\sqrt{c_5}\,ks}{8\varepsilon}\,
T^{k+1}.$  Since \(C_x\) lies inside $Z_i$, all of these boundary vertices lie in \(G_p\). Thus for $c_6 > 0$ sufficiently small, \(G_p\) contains at least $c_6 T^{k+1}$ boundary vertices.

\medskip
\noindent\emph{Case 2.} Suppose we are not in Case 1.  Let \(x\in\mathcal T\) be the highest node such that \(p\in V(C_x)\), and let \(R_x\) be the corresponding region. 

Let \(i\) be the largest index such that the cycles \(Z_1,\dots,Z_i\) are fully contained in \(R_x\). (Possibly \(i=0\).) Then for each \(j>i\), the cycle \(Z_j\) is not fully contained in \(R_x\). We first claim that for every \(j>i\), the cycle \(Z_j\) already contains a boundary vertex. Indeed, \(Z_j\) is fully present in the root region, but not fully present in \(R_x\). Hence along the root-to-\(x\) path in \(\mathcal T\), there is a first node \(y_j\) such that \(Z_j\) is not fully contained in \(R_{y_j}\). Let \(z_j\) be the parent of \(y_j\). Then \(Z_j\) is fully contained in \(R_{z_j}\), but not in \(R_{y_j}\). Therefore the separator cycle \(C_{z_j}\) must contain a vertex of \(Z_j\). Such a vertex is a boundary vertex. Since the cycles \(Z_j\) are pairwise disjoint, these boundary vertices are distinct for different \(j>i\). Thus we obtain at least \(w-i\) boundary vertices from the cycles \(Z_{i+1},\dots,Z_w\).

Next consider the cycles \(Z_1,\dots,Z_i\). Since each such cycle is fully contained in \(R_x\), while \(C_x\) contains a point outside of $Z_i$, the Jordan curve \(C_x\) must meet each \(Z_1,\ldots, Z_i\). Hence for each \(1\le j\le i\), the cycle \(Z_j\) contains a boundary vertex lying on \(C_x\). Again these vertices are distinct because the cycles \(Z_1,\dots,Z_i\) are pairwise disjoint.

Combining the two parts, we find that \(G_p\) contains at least
\[
(w-i)+i=w
\]
distinct boundary vertices. Since
\[
w=\frac{\varepsilon^{(2k+1)/k}T^{2k+1}}{\ell}
\geq c_6 T^{k+1},
\]
by setting \(c_6=c_6(k,\varepsilon)\) sufficiently small, the proof is complete.
\end{proof}

By the Observation above, the total number of boundary vertices is at least $|Q'|c_6T^{k + 1}$, which implies

$$|Q'|c_6T^{k + 1} \leq O( |\mathcal R|\sqrt{r}) \leq O\!\left(
\frac{\varepsilon}{\sqrt{c_5}\,s}
\frac{n^2}{T^{k+1}}
\right).
$$

\noindent  This implies

\[
|Q'|
\le
O\!\left(
\frac{\varepsilon}{c_6\sqrt{c_5}\,s}
\frac{n^2}{T^{2k+2}}
\right)=O\!\left(
\frac{\varepsilon}{c_6\sqrt{c_5}\,s}
\frac{n}{T}
\right).
\]

\noindent By Lemma \ref{PSlemma}, we have

$$ I(Q',\mathcal C) = O_k\left(
|Q'|^{\frac{k + 1}{2k + 1}}
|\mathcal C|^{\frac{2k}{2k + 1}}
+|Q'|+|\mathcal  C|
\right) \leq  O_k\!\left(
\left(\frac{\varepsilon}{c_6\sqrt{c_5}\,s}\right)^{\frac{k+1}{2k+1}}
\frac{n^2}{T^{k+1}}\frac{1}{T^{\frac{k+1}{2k+1}}}
+
\frac{\varepsilon }{c_6\sqrt{c_5}\,s}\frac{n^2}{T^{2k+2}}
+
n
\right).$$

\noindent Hence for $n_0 = n_0(c_5,c_6,s, \varepsilon, k)$ sufficiently large, for all $n > n_0$ with $T>n^{\frac{1}{2k+1}}_0$ we have

\[
I(Q',\mathcal C) \leq \frac{\varepsilon}{4}\frac{n^2}{T^{k+1}}.\]

Hence, there are at least $\frac{\varepsilon}{4}\frac{n^2}{T^{k+1}}$ incidences between $P'\setminus Q'$ and $\mathcal C$.  The division $\mathcal R$ gives a natural partition of $P'\setminus Q' = \bigcup_i P_i$, where 

$$P_i = \{p \in P'\setminus Q': p \in V(R_i)\}.$$

\noindent  Note that all points in $P_i$ are not boundary vertices.   By Theorem~\ref{rdiv}, we have that $|P_i|  \leq t =  c_5\frac{ks^2}{\varepsilon^2}T$.

Let $\Gamma$ be the incidence graph between $P'\setminus Q'$ and $\mathcal C$. Consider a curve $\gamma \in \mathcal{C}$.  Let $b(\gamma)$ be the number of boundary vertices of $G$ on $\gamma$ with respect to $\mathcal{R}$.  Along each maximal subarc of \(\gamma\) whose interior contains no boundary vertex, the points from a fixed part \(P_i\) appear consecutively, we partition them into consecutive blocks of size \(s\), and discard the remaining incidences from the subarc which has at most $s-1$ points. Since each such subarc lies between two consecutive boundary points on \(\gamma\), the total number of deleted incidences on \(\gamma\) is at most \(s\,(b(\gamma)+1)\).  Each boundary vertex either is a point in $Q'$ with degree at most $2\ell$, or is a vertex with degree 4. Hence, after performing this operation for each $\gamma \in \mathcal C$, we will delete at most
\[
\sum\limits_{\gamma \in \mathcal C} s(b(\gamma)  +1)  = O\!\left(s\sqrt r\,|\mathcal R|+s\ell|Q'|+s|\mathcal C|\right)
=
O\!\left(
\frac{\varepsilon}{\sqrt{c_5}}
\frac{n^2}{T^{k+1}}
+
\frac{\varepsilon^{\frac{1}{k+1}}}{\sqrt{c_5}}
\frac{n^2}{T^{k+2}}+n
\right).
\]
 
\noindent incidences.  Indeed, there are at most $O(\sqrt{r}|\mathcal R|)$ boundary points, and each point $p \in Q'$ has at most $\ell$ curves going through it.   By setting $c_5$ to be a sufficiently large absolute constant, and for $n,T$ sufficiently large, the quantity above is less than $\frac{\varepsilon}{8} \frac{n^2}{T^{k+1}}$. Thus, at least $\frac{\varepsilon}{8} \frac{n^2}{T^{k+1}}$ edges (incidences) remain in $\Gamma$. 

By the pigeonhole principle, there is a part $P_i \subset P'\setminus Q'$ such that the number of edges between $P_i$ and $\mathcal C$ in $\Gamma$ is at least
\[
\frac{\varepsilon \frac{n^2}{T^{k+1}}}{8|\mathcal R|}
=
\Omega\!\left(
\frac{ks^2}{\varepsilon}
T^{k+1}
\right),
\]

\noindent and
\[
 |P_i|  \leq t \leq  c_5\frac{ks^2}{\varepsilon^2}T.
\]

Moreover, if a curve $\gamma \in \mathcal C$ is incident to a point $p \in P_i$, and $(p,\gamma)$ is a surviving edge in $\Gamma$, then $\gamma$ must be incident to at least $s$ vertices in $P_i$.  Let $\mathcal C_0$ be the curves in $\mathcal C$ that are incident to at least $s$ points in $P_i$. By Lemma~\ref{PSlemma}, we have
\[
\Omega\!\left(
\frac{ks^2}{\varepsilon}
T^{k+1}
\right)  = O_k\left(
|P_i|^{\frac{k + 1}{2k + 1}}
|\mathcal C_0|^{\frac{2k}{2k + 1}}
+|P_i|+|\mathcal  C_0|
\right).
\]

\noindent Since $|P_i|=O_{\varepsilon,k}\!\left(T\right)$, the term $|P_i|$ is negligible for large $T$. Therefore

\[
|\mathcal C_0|
=
\Omega_k\!\left(
 \sqrt{k}\,s\,\varepsilon^{1/(2k)} 
T^{k+1}
\right).
\]

Let $H=(P_i,E)$ be the auxiliary $(k + 1)$-uniform hypergraph whose vertex set is $P_i$. For each $\gamma \in \mathcal C_0$, consider the points of $P_i$ that appear along $\gamma$ from one end to the other. We then create a $K^{(k+1)}_s$ on the first $s$ points along $\gamma$, then create another $K^{(k+1)}_s$ on the next set of $s$ points along $\gamma$, and repeat this process until there are fewer than $s$ points left on $\gamma$. Thus each $\gamma$ gives rise to at least one copy of $K^{(k+1)}_s$.  Moreover, since no two curves in $\mathcal C_0$ have $k + 1$ points in common, the corresponding copies of $K_s^{(k + 1)}$ are edge disjoint. Since
\[
|\mathcal C_0| \geq \Omega_k\!\left(
 \sqrt{k}\,s\,\varepsilon^{1/(2k)} 
T^{k+1}
\right)
\quad\text{and}\quad
N = |P_i|  \leq c_5\frac{ks^2}{\varepsilon^2}T,
\]
we have $|\mathcal C_0| \geq \delta N^{k + 1}$ for some constant $\delta = \delta(\varepsilon,k,s) > 0$. Thus $H$ is a $(k +1)$-uniform hypergraph on $N$ vertices that contains at least $\delta N^{k + 1}$ edge-disjoint copies of $K^{(k+1)}_s$. By Lemma~\ref{lem:supersat-hyper}, it follows that $H$ contains at least $\alpha N^s$ copies of $K^{(k+1)}_s$, where $\alpha=\alpha(k,\varepsilon,s)$.

We now bound the number of $K^{(k+1)}_s$ whose $s$ vertices contain $k + 2$ points from $P_i$ lying on a single curve $\gamma\in\mathcal C_0$. These $k + 2$ points must lie in a single block of $s$ consecutive points along that curve.  There are at most $sN^{k + 1}$ such $(k +2)$-tuples, since no two curves have $k + 1$ points in common. Once the \(k+2\) vertices lying on a single curve are chosen, the remaining \(s-k-2\) vertices may be chosen arbitrarily from \(P_i\). Thus each such $(k+2)$-tuple gives rise to at most \(O(N^{\,s-k-2})\) bad copies. Since there are at most \(N^{k+1}\) such tuples, the total number of such copies of $K_s^{(k + 1)}$ is
\[
O\!\left(sN^{k+1}\cdot N^{\,s-k-2}\right)=O(sN^{s-1}).
\]

 Finally, by setting $n > n_0$ sufficiently large, the number of such bad copies is at most
\[
csN^{s-1} < \alpha N^s,
\]
since \(N\) will be large. Hence, there exists a copy of \(K^{(k + 1)}_s\) in $H$ in which no $k + 2$ vertices lie on the same curve of \(\mathcal C_0\). Equivalently, there are \(s\) points of \(P_i\) such that each $(k + 1)$-tuple  is joined by a distinct curve from \(\mathcal C_0\), yielding \(\binom{s}{k + 1}\) distinct curves determined by these points.
\end{proof}

\medskip

Let us remark that the above proof goes through verbatim to the non-diagonal case, with a change of parameter 
\[
T=\left( \frac{|\mathcal{C}|^2}{|P|}\right)^{\frac{1}{2k+1}}
\]
and extends to the following theorem. 
\begin{theorem}
Let \(s>k+1\ge 2\) be fixed integers. Let \(P\) be a set of points in the plane and \(\mathcal C\) a family of simple \(k\)-intersecting curves. Suppose that \(P\) does not contain \(s\) points such that every \((k+1)\)-tuple among them is contained in a distinct curve of \(\mathcal C\). Then for every \(\epsilon>0\),
\[
I(P,\mathcal C)< \epsilon\, |P|^{\frac{k+1}{2k+1}}\,|\mathcal{C}|^{\frac{2k}{2k+1}},
\]
provided that \(\frac{|P|^{k+1}}{|\mathcal{C}|}\), \(|\mathcal{C}|\), and \(\frac{|\mathcal{C}|^2}{|P|}\) are sufficiently large in terms of \(\epsilon\).
\end{theorem}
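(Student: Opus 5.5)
We run the proof of Theorem~\ref{soly} again with $m_0:=|P|$ points and $n:=|\mathcal C|$ curves, setting
\[
T=\left(\frac{n^2}{m_0}\right)^{\frac{1}{2k+1}}.
\]
With this choice, $m_0^{\frac{k+1}{2k+1}}n^{\frac{2k}{2k+1}}=n^2/T^{k+1}$. We argue by contradiction: assume $I(P,\mathcal C)\ge \epsilon\, n^2/T^{k+1}$. We must exhibit $s$ points of $P$ whose $(k+1)$-tuples lie on distinct curves. As before, we first pass to the points of degree less than $\ell=c_4\epsilon^{-k/(k+1)}T^k$. Lemma~\ref{smalldeg} shows that the high-degree points carry at most $O(n^2/\ell^{1+1/k}+n)\le \frac{\epsilon}{2}n^2/T^{k+1}$ incidences. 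The additive $n$ term is harmless exactly when $n\ll \epsilon n^2/T^{k+1}$, i.e.\ $T^{k+1}\ll \epsilon n$. This follows from $m_0^{k+1}/n$ being large, since $T^{2k+1}=n^2/m_0$ and $(n/T^{k+1})^{2k+1}= m_0^{k+1}/n^{1}$ after simplification. The same hypothesis makes the additive $|P|$ and $|\mathcal C|$ terms in Lemma~\ref{PSlemma} negligible throughout. Lemma~\ref{PSlemma} then still forces $m=\Omega(\epsilon^{(2k+1)/k}n^2)$ crossings.

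Next we build $G$ with $w=\epsilon^{(2k+1)/k}n/\ell$ nested cycles around each low-degree point. The number of added vertices is at most $2w\ell\, m_0=O(\epsilon^{(2k+1)/k}n m_0)$, which is $O(n^2)$ provided $m_0\lesssim n$. Otherwise one rescales $w$ to $\epsilon^{(2k+1)/k}n^2/(m_0\ell)=\epsilon^{(2k+1)/k}T^{2k+1}/\ell$. This is the formula of the diagonal proof written in terms of $T$, and it still gives $w\ge c_6T^{k+1}$. We then apply Theorem~\ref{rdiv} with the same $r=c_5k^2s^2\epsilon^{-2}T^{2k+2}$ and now $t=c_5ks^2\epsilon^{-2}\,m_0T^{2k+2}/n^2$, chosen so that $m_0/t$ matches $n^2/(rT^{0})$ up to constants. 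Then $|\mathcal R|=O(\epsilon^2 n^2/(c_5ks^2T^{2k+2}))$ and $t=\Theta_{\epsilon,k,s}(T)$ because $m_0T^{2k+1}=n^2$. All subsequent estimates are therefore literally the same expressions in $n,T$.

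The remaining steps go through as before. The Observation gives $|Q'|=O(n^2/T^{2k+2})=O(m_0/T)$. Lemma~\ref{PSlemma} bounds $I(Q',\mathcal C)$ by $(\cdot)^{\frac{k+1}{2k+1}}n^2/T^{k+1}\cdot T^{-\frac{k+1}{2k+1}}$ plus additive terms. This is small once $T$ (i.e.\ $n^2/m_0$) is large. The boundary-deletion count and pigeonhole then produce a part $P_i$ with $|P_i|=O(T)$ and $\Omega(T^{k+1})$ incidences. From there the hypergraph step with Lemma~\ref{lem:supersat-hyper} and the count of bad copies are unchanged, since they only involve $N=|P_i|$ growing with $T$.

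The main obstacle is the bookkeeping in the non-diagonal regime. One must check that each additive error term ($n$, $|P|$, $s|\mathcal C|$ and the Case~1 size bound $|V(G_p)|<r/8$) is dominated by $\epsilon n^2/T^{k+1}$ or by the relevant threshold. These checks are precisely where the three largeness hypotheses on $|P|^{k+1}/|\mathcal C|$, $|\mathcal C|$ and $|\mathcal C|^2/|P|$ are consumed. In particular, $|V(G)|=O(m)$ requires the nested-cycle count to stay $O(n^2)$, which is ensured by the $T$-based choice of $w$.
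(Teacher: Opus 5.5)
Your proposal is correct and is the paper's argument: the paper only remarks that the proof of Theorem~\ref{soly} goes through verbatim with $T=(|\mathcal C|^2/|P|)^{1/(2k+1)}$, and your $\ell$, $r$, $w=\epsilon^{(2k+1)/k}T^{2k+1}/\ell$ and $t$ (which simplifies to exactly $c_5ks^2\epsilon^{-2}T$) are the diagonal choices rewritten in terms of $T$, with your bookkeeping making explicit where the largeness hypotheses are used. Two small corrections to that bookkeeping: use the $T$-based $w$ in every regime, since the alternative $\epsilon^{(2k+1)/k}|\mathcal C|/\ell$ falls below $c_6T^{k+1}$ when $|P|\ll|\mathcal C|$; and the additive $|P|$ terms are absorbed because $|P|\,T^{k+1}/|\mathcal C|^2=T^{-k}$, i.e.\ because $|\mathcal C|^2/|P|$ is large, not because $|P|^{k+1}/|\mathcal C|$ is large.
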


\section{Proof of Theorem \ref{thm:lower}}\label{sec:lower}

We use a standard point--line construction for the Szemerédi--Trotter theorem (see \cite{SuT}).  Let
\[
P=\{(a,b)\in \mathbb{N}^2: a<n^{1/3},\ b<n^{2/3}\}
\]
and
\[
\mathcal L=\{y=ax+b: a,b\in \mathbb{N},\ a<n^{1/3},\ b<n^{2/3}\}.
\]
Then \(|P|=|\mathcal L|=n\), and
\[
I(P,\mathcal L)=\Theta(n^{4/3}).
\]

We then define a graph \(G\) on vertex set \(P\), and whose edges are pairs of points in $P$ that lie on a common line of \(\mathcal L\).  We will use the following lemma of Suk and Tomon.

\begin{lemma}[{\cite[Claim 16]{SuT}}]
For the graph \(G\) arising from the standard construction above, every vertex has degree at most $n^{2/3}$, and every pair of distinct vertices has at most $n^{1/3+o(1)}$ common neighbors.
\end{lemma}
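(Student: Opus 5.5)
The plan is to compute in explicit coordinates; both bounds are elementary counting facts about the grid $P$ and the family $\mathcal L$. Two points $(a_1,b_1),(a_2,b_2)\in P$ are adjacent in $G$ exactly when $a_1\neq a_2$ and there is a line $y=ax+b$ in $\mathcal L$ through both. Such a line is unique, with slope $a=(b_2-b_1)/(a_2-a_1)$ and intercept $b=b_1-aa_1$. So adjacency requires three things: $a_2-a_1$ divides $b_2-b_1$; the quotient $a$ satisfies $0\le a<n^{1/3}$; and the intercept satisfies $0\le b<n^{2/3}$.

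\emph{Degree bound.} Fix a vertex $p=(a_1,b_1)$. Every neighbor of $p$ lies on some line of $\mathcal L$ through $p$. For each slope $a<n^{1/3}$ there is at most one line of $\mathcal L$ through $p$ with that slope, since the intercept is then forced to be $b=b_1-aa_1$. That line meets $P$ in at most $n^{1/3}$ points, one for each $x$-coordinate. So the number of neighbors is at most $n^{1/3}\cdot n^{1/3}=n^{2/3}$.

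\emph{Common neighbors.} Fix distinct $p=(a_1,b_1)$ and $q=(a_2,b_2)$. A common neighbor $z=(x,y)$ lies on a line of $\mathcal L$ through $p$ and also on a line of $\mathcal L$ through $q$. Choose the slope $\alpha$ of the line through $p$ and the slope $\beta$ of the line through $q$, each in $[0,n^{1/3})$. If $\alpha\neq\beta$, the two lines meet in at most one point, which then determines $z$.
\begin{itemize}
\item If $a_1=a_2$, then subtracting the two line equations gives $b_1-b_2=(\alpha-\beta)(x-a_1)$. So $\alpha-\beta$ is a divisor of the fixed nonzero integer $b_1-b_2$. The pair $(\alpha-\beta,\ x-a_1)$ is therefore a factorization of $b_1-b_2$, and a number of size at most $n^{2/3}$ has only $n^{o(1)}$ divisors. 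Hence there are $n^{o(1)}$ choices of the difference $\alpha-\beta$, and it determines $x$. For each such choice, the slope $\alpha$ has at most $n^{1/3}$ values, and together with $x$ it determines $z$. This gives $n^{1/3+o(1)}$ common neighbors.
\item If $a_1\neq a_2$, I would translate so that $p$ sits at the origin. The equations become $y-b_1=\alpha(x-a_1)$ and $y-b_2=\beta(x-a_2)$. Eliminating $y$ gives $(\alpha-\beta)x=\alpha a_1-\beta a_2+b_2-b_1$. For fixed $\alpha$, each value of $\beta$ gives at most one $x$, but counting this way yields up to $n^{2/3}$ pairs, which is too many. This is the main obstacle.
\end{itemize}

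For the case $a_1\neq a_2$, the remedy is to use that $x$ ranges over only $n^{1/3}$ values. Fix $x$ with $x\neq a_1,a_2$. Then $z$ is determined by $y$. We need $(x-a_1)\mid(y-b_1)$ and $(x-a_2)\mid(y-b_2)$, so $y$ is constrained modulo $\mathrm{lcm}(x-a_1,\,x-a_2)$. In addition, $y$ lies in a window of length $\min\bigl(|x-a_1|,|x-a_2|\bigr)\cdot n^{1/3}$, because the slopes lie in $[0,n^{1/3})$. The two moduli have $\gcd$ dividing $a_2-a_1$. So the number of admissible $y$ is at most
\[
1+\frac{\gcd(x-a_1,\,a_2-a_1)\,n^{1/3}}{\max\bigl(|x-a_1|,|x-a_2|\bigr)}.
\]
Summing over $x<n^{1/3}$, the constant term contributes $n^{1/3}$. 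For the second term, group the values of $x$ by $d=\gcd(x-a_1,\,a_2-a_1)$, which ranges over the divisors of $a_2-a_1$. The values with a given $d$ are spaced $d$ apart, so their contribution is a harmonic-type sum of size $O(n^{1/3}\log n)$. There are $n^{o(1)}$ divisors $d$, giving $n^{1/3+o(1)}$ in total. The excluded values $x=a_1,a_2$ would force $z$ to share an $x$-coordinate with $p$ or $q$, which is impossible for a neighbor, so they contribute nothing. Altogether, every pair has at most $n^{1/3+o(1)}$ common neighbors, which completes the lemma.
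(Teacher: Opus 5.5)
The paper gives no proof of this lemma; it quotes it from Suk--Tomon [Claim 16], so there is no in-paper argument to compare against. Your proposal is a correct, self-contained verification.

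\textbf{Degree bound.} This is immediate: through a point there is at most one line of $\mathcal L$ of each slope, and each such line meets $P$ in at most $n^{1/3}$ points.

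\textbf{Common neighbours, $a_1=a_2$.} The identity $b_1-b_2=(\alpha-\beta)(x-a_1)$ forces $x-a_1$ to be one of at most $2\tau(|b_1-b_2|)=n^{o(1)}$ divisors, where $\tau$ is the number-of-divisors function. The slope $\alpha$ then determines $z$, so there are at most $n^{1/3+o(1)}$ common neighbours.

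\textbf{Common neighbours, $a_1\neq a_2$.} For each $x\notin\{a_1,a_2\}$, the two divisibility conditions put $y$ in a single residue class modulo $\mathrm{lcm}(x-a_1,x-a_2)$, by the Chinese remainder theorem. The slope constraints put $y$ in an interval of length $\min(|x-a_1|,|x-a_2|)\,n^{1/3}$. Since $\gcd(x-a_1,x-a_2)=\gcd(x-a_1,a_2-a_1)$, grouping the values of $x$ by this gcd gives one harmonic sum per divisor of $a_2-a_1$. The total is $n^{1/3}\bigl(1+O(\tau(|a_2-a_1|)\log n)\bigr)=n^{1/3+o(1)}$, as you claim.

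\textbf{Comparison.} Your route makes the $n^{o(1)}$ factor explicit (a divisor function times $\log n$) and removes the dependence on the external reference. The paper's citation simply buys brevity.

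\textbf{Cosmetic points.}
\begin{itemize}
\item The ``translate $p$ to the origin'' remark and the count over pairs $(\alpha,\beta)$ in your second bullet are a discarded false start, not used later, and can be deleted.
\item In the degree bound, it is cleanest to note that each line through $p$ contributes at most $n^{1/3}-1$ neighbours, since it contains $p$. This assumes $n^{1/3}$ is an integer, as the construction implicitly does, and it avoids any rounding issue with the stated bound $n^{2/3}$.
\end{itemize}
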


\begin{proof}[Proof of Theorem \ref{thm:lower}]
Any set of \(s\) points of \(P\) such that every pair lies on a distinct line of \(\mathcal L\) spans a copy of \(K_s\) in \(G\). We bound the number of such copies as follows. First choose an edge \(uv\) of \(G\). Since every vertex of \(G\) has degree at most \(n^{2/3}\), there are at most
\[
n\cdot n^{2/3}=n^{5/3}
\]
choices for \(u\) and \(v\). Once \(u\) and \(v\) are fixed, every remaining vertex of the clique must belong to the common neighborhood of \(u\) and \(v\), which has size at most \(n^{1/3+o(1)}\) by the lemma. Hence the number of such \(s\)-point sets is at most
\[
n^{5/3}\left(n^{1/3+o(1)}\right)^{s-2}
=
n^{1+s/3+o(1)}.
\]

Now independently select each incidence of \((P,\mathcal L)\) with probability \(p\), where \(p\) will be chosen later. Thus the expected number of selected incidences is
\[
\Theta(n^{4/3}p).
\]

For every unselected incidence \((q,\ell)\), we modify \(\ell\) inside a sufficiently small neighborhood of \(q\) so that the resulting curve avoids \(q\) but agrees with \(\ell\) outside that neighborhood. Choosing these neighborhoods iteratively, making them sufficiently small each time, no new incidences with points of \(P\) are created, and every pair of resulting curves still intersects at most once.

Fix a set \(S\subset P\) of size \(s\) such that every pair of points of \(S\) lies on a distinct line of \(\mathcal L\). For each pair \(\{u,v\}\subset S\), let \(\ell_{uv}\) denote the unique line of \(\mathcal L\) containing \(u\) and \(v\). In order for \(S\) to survive after the perturbation, both incidences \((u,\ell_{uv})\) and \((v,\ell_{uv})\) must be selected for every pair \(\{u,v\}\subset S\). Since these are \(2\binom{s}{2}\) distinct incidences, it follows that
\[
\Pr(S\text{ survives})=p^{2\binom{s}{2}}.
\]

\noindent Therefore the expected number of surviving bad \(s\)-tuples is at most
\[
n^{1+s/3+o(1)}p^{2\binom{s}{2}}.
\]

\noindent Choose \(p\) so that
\[
n^{4/3}p/2
=
n^{1+s/3+o(1)}p^{2\binom{s}{2}}.
\]
Since
\[
2\binom{s}{2}-1=s(s-1)-1,
\]
this gives
\[
p
=
n^{-\frac{s-1}{3(s^2-s-1)}-o(1)}.
\]
With this choice, the expected number of surviving bad \(s\)-tuples is at most half the expected number of selected incidences. Therefore, by the deletion method, there exists a choice of selected incidences such that after deleting one selected incidence from each surviving bad configuration, the number of remaining incidences is at least
\[
n^{4/3}p/2
=
n^{\frac43-\frac{s-1}{3(s^2-s-1)}-o(1)}.
\]

Performing the corresponding local perturbations, we obtain a set of \(n\) points and \(n\) pseudo-segments in the plane with no \(s\) points such that every pair lies on a distinct curve, and with at least
\[
n^{\frac43-\frac{s-1}{3(s^2-s-1)}-o(1)}
\]
incidences. 
\end{proof}

\section{Concluding remarks}

A longstanding open problem is to determine the correct extremal exponent for incidences between \(n\) points and \(n\) \(k\)-intersecting curves when \(k\ge 2\). As mentioned in the introduction, the best known lower bound is still the classical Szemer\'edi--Trotter construction, which gives \(\Omega(n^{4/3})\), while the best general upper bound is the Pach--Sharir bound
\[
O\!\left(n^{\frac{3k+1}{2k+1}}\right).
\]
For every fixed \(k\ge 2\), there remains a substantial gap between these bounds. It would be very interesting to determine whether the correct asymptotic behavior is closer to the lower bound or the upper bound. On the other hand, the exponent in the Pach--Sharir bound cannot be improved in the unbalanced regimes \( |P|=\Omega(|\mathcal{C}|^{2}) \) or \( |\mathcal C|=\Omega(|P|^{k+1}) \). 

Another interesting direction is to determine whether one can obtain a polynomial improvement in Theorem~1.2. Since our argument relies on the hypergraph removal lemma, it yields only a qualitative \(o(\cdot)\)-improvement over the Pach--Sharir bound. Even in the special case of point-line arrangements, proving a polynomial improvement beyond Solymosi's result~\cite{SolymosiLocallyDense} remains a challenging problem.

\end{document}